\title[Peripheral Poisson boundary]{Peripheral Poisson boundary}
\subjclass[2010]{46L30, 46L40}
\numberwithin{equation}{section}
\newtheorem{theorem}{\bf Theorem}[section]
\newtheorem{lemma}[theorem]{\bf Lemma}
\newtheorem{defin}[theorem]{\bf Definition}
\newtheorem{example}[theorem]{\bf Example}
\newtheorem{corollary}[theorem]{\bf Corollary}
\newcommand{\ot}{\otimes}
\newcommand{\N}{\mathbb{N}}
\newcommand{\M}{\mathbb{M}}
\newcommand{\T}{\mathbb{T}}
\newcommand{\ol}{\overline}
\newcommand{\norm}[1]{\| #1 \|}
\makeatletter \@namedef{subjclassname@2020}{\textup{2020}
Mathematics Subject Classification} \makeatother
\begin{document}

\title[Peripheral Poisson Boundary]{Peripheral Poisson Boundary}

\author{B. V. Rajarama Bhat}
\address{Indian Statistical Institute, Stat Math Unit, R V College Post, Bengaluru, 560059, India.}
\email{bvrajaramabhat@gmail.com, bhat@isibang.ac.in}

\author{Samir Kar}
\address{Indian Statistical Institute, Stat Math Unit, R V College Post, Bengaluru, 560059, India.}
\email{msamirkar@gmail.com}

\author{Bharat Talwar}
\address{ Department of Mathematics, Nazarbayev University, Nur-Sultan, Kazakhstan.}
\email{btalwar.math@gmail.com, bharat.talwar@nu.edu.kz}

\keywords{Poisson boundary, dilation theory, peripheral spectrum,
unital completely positive maps}

\subjclass[2020]{46L57, 47A20, 81S22}

\maketitle

\begin{abstract}
 It is shown that the operator space generated
by peripheral eigenvectors of a unital completely positive map on a
von Neumann algebra has a $C^*$-algebra structure. This extends the
notion of non-commutative Poisson boundary by including the point
spectrum of the map contained in the unit circle. The main
ingredient is dilation theory. This theory provides a simple formula
for the new product. The notion has implications to our
understanding of quantum dynamics. For instance, it is shown that the peripheral Poisson boundary remains invariant in discrete quantum dynamics.

\end{abstract} \maketitle

\section{Introduction}

Unital completely positive (UCP) maps on von Neumann algebras are
considered as non-commutative (or quantum) Markov maps. Their fixed
points are called harmonic elements. In general, this collection of
harmonic elements is not an algebra. However, it is possible to
introduce a new product called Choi-Effros product (\cite{CE}) which
makes it a von Neumann algebra.

Inspired from the theory of classical random walks on groups Izumi
\cite{Izumi2002} called this algebra (or its concrete realization)
as noncommutative Poisson boundary. This concept has  attracted a
lot of interest in recent years ( See \cite{Izumi2004},
\cite{Izumi2012}, \cite{BBDR}, \cite{SS}, \cite{Vaes}, \cite{DP},
\cite{Ka}). The space of harmonic elements is just the eigenspaces
with respect to the eigenvalue one. In this short note, we look at
the operator space formed by eigenvectors corresponding to
eigenvalues on the unit circle for quantum Markov maps, i.e., for
UCP maps. Like in the case of harmonic elements,  to begin with we
have only an operator space, which may not be an algebra. But we can
impose a new product to get a $C^*$-algebra. Eigenvalues on the unit
circle are known as peripheral eigenvalues and the corresponding
eigenvectors are called peripheral eigenvectors. Therefore we are
providing an extension of Choi-Effros product to the operator space
spanned by peripheral eigenvectors. To spell out our results in more
detail we need some notation.

 All the Hilbert spaces considered will be complex and separable
whose inner product $\langle \cdot , \cdot \rangle $ is anti-linear
in the first variable.  We denote the algebra of all bounded
operators on a Hilbert space ${\mathcal H}$ by $B({\mathcal H})$. In
formulae, we indicate strong operator topology limit by `$s-\lim $'.
The unit circle, $\{z\in {\mathbb C}: |z|=1\}$ is denoted by
${\mathbb T}$.

Let $\mathcal H$ be a  Hilbert space and let ${\mathcal A}\subseteq
B({\mathcal H})$ be a von Neumann algebra. Suppose $\tau :{\mathcal
A}\to {\mathcal A}$ is a normal unital completely positive map. It
is well-known that the fixed point space of $\tau $ (also known as
the space of harmonic elements of $\tau $),
$$F (\tau ):=E_1(\tau )=\{ x\in {\mathcal A}: \tau (x)=x\}$$
admits a  product `$\circ $', called Choi-Effros product on $F(\tau
)$, which will make it a von Neumann algebra. Note that there is no
modification in the norm or the involution. Concrete realization of
this algebra is known as the non-commutative Poisson boundary of
$\tau .$  For any $\lambda \in {\mathbb C}$ take
$$E_{\lambda }(\tau ) :=\{ x\in {\mathcal A}: \tau (x) =\lambda
x\}.$$ The set of all  $\lambda \in {\mathbb T}$ for which
$E_{\lambda }(\tau )$ is non-trivial is known as the peripheral
point spectrum of $\tau $ and the corresponding eigenvectors are
known as peripheral eigenvectors. Take $$E(\tau ) = \mbox{span}\{x:
x\in E_{\lambda }(\tau ), ~\mbox{for some}~\lambda \in {\mathbb
T}\}.$$

In our main theorem (Theorem \ref{main}) it is shown that the  norm
closure of $E(\tau )$ is a $C^*$-algebra with respect to a new
product. Here too, the norm and the involution are not modified.
Only the product is replaced with a new product. The norm is of
course uniquely determined as a $*$-algebra can admit at most one
norm which makes it a $C^*$-algebra. We may call this $C^*$-algebra
as the peripheral Poisson boundary.

In fact, we have a very simple formula for the product (See Theorem
\ref{formula}).  No ultra-filters are required to compute this
product. It is also shown that if the UCP map leaves a faithful
state invariant then the new product is same as the original
product. It is not hard to see that the action of the UCP maps
yields an automorphism of the peripheral Poisson boundary (Theorem
\ref{automorphism}).

In Section 3, we present a few examples to illustrate the theory. We
see that the Poisson boundary product of Toeplitz operators, which
comes from $L^{\infty}({\mathbb T})$, has a very natural extension
to the peripheral Poisson boundary by incorporating a family of
unitaries parametrized by the unit circle.  It is well known that
the norm of a Toeplitz operator is same as the $L^{\infty}$ norm of
its symbol. In equation (\ref{norm}) we have an interesting
extension of this result.  One of the surprising features we see in
this example is that the algebra we get is not a von Neumann
algebra. The equation (\ref{norm}) does not extend to the strong
closure.

The importance of peripheral spectrum in quantum dynamics is well
known and has its origin in classical Perron-Frobenius theory ( See
\cite{Al}, \cite{Ar1}, \cite{EvansKrohn}, \cite{FEU} \cite{FOR},
\cite{GrohMZ}).  The main point  is that generally the peripheral
spectrum encodes the persistent/recurrent part of the dynamics and
the rest has only some transient component.
Below in \Cref{automorphism}, we answer a question raised in \cite{FV} by showing that restriction of the UCP map to the peripheral part is an automorphism in our setting.

For a UCP map $\tau $, if we look at the Poisson boundaries of $\tau
, \tau ^2, ...$, they keep changing, for instance if $e^{\frac{2\pi
i}{k}}$ (for some $k>1$), is in the point spectrum of $\tau $, the
corresponding eigenspace gets included among harmonic elements of
$\tau ^k$, but they are not part of the set of harmonic elements of
$\tau ^{l}$ when $l$ is not a multiple of $k.$ In other words the
point spectrum keeps `rotating' on the unit circle when we take
powers of $\tau $. However, as we see in Theorem \ref{stability},
the peripheral Poisson boundary of $\tau $ is same as that of $\tau
^k$ for all $k\geq 1.$ Most of the papers dealing with peripheral
point spectrum of UCP maps have additional conditions such as finite
dimensionality of the algebra and/or existence of an invariant
state. We do not impose any such restrictions.

The main technical tool we need is the following minimal dilation
theorem for unital completely positive maps. In the following for
Hilbert spaces ${\mathcal H}\subset {\mathcal K}$, any operator
$x\in B({\mathcal H})$ is identified with the operator $pxp$ in
$B({\mathcal K})$, where $p$ is the orthogonal projection of
${\mathcal K}$ onto ${\mathcal H}.$

\begin{theorem}\label{Dilation} (\cite{Bha96}, \cite{Bha99}, and
\cite{BS})  Let ${\mathcal A}\subseteq B({\mathcal H})$ be a von
Neumann algebra and let $\tau :{\mathcal A}\to {\mathcal A}$ be a
normal unital completely positive map. Then there exists a triple
$({\mathcal K}, {\mathcal B}, \theta )$, where  (i)  ${\mathcal K}$
is a Hilbert space containing ${\mathcal H}$ as a closed  subspace;
(ii) ${\mathcal B}\subseteq B({\mathcal K})$ is a von Neumann
algebra, satisfying ${\mathcal A}=p{\mathcal B}p$ where $p$ is the
orthogonal projection of ${\mathcal K}$ onto ${\mathcal H}$; (iii)
$\theta :{\mathcal B} \to {\mathcal B}$ is a normal, unital
$*$-endomorphism; (iv) (dilation property): $$\tau ^n(x)=p\theta
^n(x)p,~~\forall x\in {\mathcal A}, n\in {\mathbb Z}_+;$$ (v)
(minimality of the space) ${\mathcal
K}=\overline{\mbox{span}}\{\theta ^n(x_n)\theta
^{n-1}(x_{n-1})\ldots \theta (x_1)h: x_1, \ldots , x_n\in {\mathcal
A}, h\in {\mathcal H}, n\in {\mathbb Z}_+\};$ (vi) (minimality of
the algebra) ${\mathcal B}$ is the von Neumann algebra generated by
$\{\theta ^n(x): n\in {\mathbb Z}_+, x\in {\mathcal A}\}.$ (vii)
(uniqueness) If $({\mathcal K}_1, {\mathcal B}_1, {\theta }_1)$ is
another such triple then there exists a unitary $U:{\mathcal K}\to
{\mathcal K}_1$ satisfying (a) $Uh=h$ for all $h\in {\mathcal H}$;
(b) ${\mathcal B}_1=\{ UbU^*: b\in {\mathcal B}\}$; (c) $\theta
_1(x)= U\theta (x)U^*, \forall x\in {\mathcal A}.$
\end{theorem}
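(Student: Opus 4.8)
The plan is to construct $(\mathcal{K},\mathcal{B},\theta)$ as an inductive limit of iterated minimal Stinespring dilations, following \cite{Bha96,Bha99,BS}. First I would build the tower: set $\mathcal{H}_0=\mathcal{H}$, $\pi_0=\mathrm{id}_{\mathcal{A}}$, and inductively, given a normal unital representation $\pi_n\colon\mathcal{A}\to B(\mathcal{H}_n)$, let $(\mathcal{H}_{n+1},\pi_{n+1},\iota_n)$ be the \emph{minimal} Stinespring dilation of the normal UCP map $\pi_n\circ\tau$. Then $\iota_n\colon\mathcal{H}_n\to\mathcal{H}_{n+1}$ is an isometry (since $\pi_n\circ\tau$ is unital), $\pi_{n+1}$ is a normal unital representation, $\iota_n^*\pi_{n+1}(x)\iota_n=\pi_n(\tau(x))$, and $\mathcal{H}_{n+1}=\overline{\mathrm{span}}\,\pi_{n+1}(\mathcal{A})\iota_n\mathcal{H}_n$. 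Using the $\iota_n$ to regard $\mathcal{H}_n\subseteq\mathcal{H}_{n+1}$, put $\mathcal{K}:=\overline{\bigcup_n\mathcal{H}_n}$, let $q_n$ be the orthogonal projection onto $\mathcal{H}_n$ and $p:=q_0$; iterating the relation $\iota_k^*\pi_{k+1}(\cdot)\iota_k=\pi_k(\tau(\cdot))$ down to $\pi_0=\mathrm{id}$ gives $q_0\pi_n(x)q_0=\tau^n(x)$ on $\mathcal{H}$. It is useful to record the concrete model: $\mathcal{H}_n$ is the separated completion of the algebraic tensor product $\mathcal{A}^{\odot n}\odot\mathcal{H}$ for the form
$$\bigl\langle a_n\otimes\cdots\otimes a_1\otimes h,\; b_n\otimes\cdots\otimes b_1\otimes g\bigr\rangle=\Bigl\langle h,\;\tau\bigl(a_1^*\,\tau\bigl(a_2^*\cdots\tau(a_n^*b_n)\cdots b_2\bigr)\,b_1\bigr)\,g\Bigr\rangle ,$$
which is positive semidefinite precisely because $\tau$ is completely positive ($n=1$ is Stinespring's lemma; the general case is its iteration, as this form is the Stinespring form of $\pi_{n-1}\circ\tau$ over $\mathcal{H}_{n-1}$). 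The symbol $a_n\otimes\cdots\otimes a_1\otimes h$ represents the element that will be written $\theta^n(a_n)\cdots\theta(a_1)h$.

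Next I would produce $\theta$. Let $j_n\colon\mathcal{A}\to B(\mathcal{K})$ send $x$ to the operator acting as $\pi_n(x)$ on $\mathcal{H}_n$ and as $0$ on $\mathcal{H}_n^{\perp}$; each $j_n$ is a normal $*$-homomorphism with $j_n(1)=q_n$, and $q_n\uparrow 1$. Let $\mathcal{B}$ be the von Neumann algebra generated by $\bigcup_n j_n(\mathcal{A})$. Since $j_0(x)=pxp$, the identities $p\,j_n(x)\,p=q_0\pi_n(x)q_0=\tau^n(x)$ together with $\theta(p)=q_1\ge q_0=p$ (once $\theta$ is in hand) yield property (ii), $\mathcal{A}=p\mathcal{B}p$. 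The crux is that the assignment $j_n(x)\mapsto j_{n+1}(x)$ extends to a normal unital $*$-endomorphism $\theta$ of $\mathcal{B}$. This is where the minimality of the Stinespring dilations is essential; it can be verified via the shift-invariance visible in the inner-product formula above, or by exhibiting $\theta$ concretely as $b\mapsto\sum_i w_i b w_i^{*}$ for a family of isometries $(w_i)$ with pairwise orthogonal ranges summing to the identity, built from the Stinespring data — such a map is automatically a normal unital $*$-endomorphism, and one checks that it sends $j_n(x)$ to $j_{n+1}(x)$. Granting $\theta$: then $\theta^n(j_0(x))=j_n(x)$, so the dilation property (iv) $p\theta^n(x)p=\tau^n(x)$ follows; $\mathcal{B}$ is generated by the $\theta^n(x)$, giving (vi); the vectors $\theta^n(x_n)\cdots\theta(x_1)h$ span a dense subspace of $\mathcal{K}$, giving (v); and $\theta(1)=\theta(\sup_n q_n)=\sup_n q_{n+1}=1$.

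Finally, for uniqueness (vii), given another triple $(\mathcal{K}_1,\mathcal{B}_1,\theta_1)$, I would define $U\colon\mathcal{K}\to\mathcal{K}_1$ on the spanning vectors by $U\bigl(\theta^n(x_n)\cdots\theta(x_1)h\bigr)=\theta_1^{n}(x_n)\cdots\theta_1(x_1)h$. The inner product of two such vectors is computed, in either dilation, by the nested-$\tau$ formula above — it uses only the dilation property, the fact that $\theta$ is a $*$-homomorphism, and $\theta(p)\ge p$ — so $U$ is a well-defined isometry; it is onto by the minimality (v) of $\mathcal{K}_1$, fixes $\mathcal{H}$ pointwise, and conjugates $\theta$ to $\theta_1$ and $\mathcal{B}$ to $\mathcal{B}_1$ by comparing actions on the spanning set. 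I expect the main obstacle to be this middle step — exhibiting $\theta$ and checking it is a well-defined normal unital $*$-endomorphism with $\theta(j_n(x))=j_{n+1}(x)$; this is the substance of Bhat's theorem and rests on the minimality (hence uniqueness) of the successive Stinespring dilations, equivalently on the shift-invariance of the inner-product formula. The first and third steps are, by comparison, bookkeeping around Stinespring's theorem and the explicit form of the inner product.
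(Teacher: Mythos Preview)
The paper does not supply its own proof of this theorem: it is quoted from \cite{Bha96,Bha99,BS}, and the surrounding discussion only points to the Appendix of \cite{DP} for ``a very compact and neat exposition of this case'' via ``repeated application of Stinespring's theorem,'' which it says is ``essentially equivalent to first constructing the weak Markov flow and then writing down the endomorphism.'' Your sketch is precisely that iterated-Stinespring/inductive-limit construction, so it matches the approach the paper endorses by citation; there is nothing further to compare against in the paper itself.
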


 This theorem was first proved for normal CP maps on $B({\mathcal
H})$ in \cite{Bha96} and then for CP maps on general $C^*$-algebras
in \cite{Bha99}. Here we have taken the setting of von Neumann
algebras with normal UCP maps. This was first done in \cite{BS},
using a Hilbert $C^*$-module approach. It is to be noted that, these
articles  handle discrete (where the semigroup under consideration
is ${\mathbb Z}_+$) and the continuous case (where the semigroup is
${\mathbb R}_+$) at one go and the main focus was on the continuous
case.  Presently we need only the much simpler discrete case. A very
compact and neat exposition of this case can be seen in the Appendix
of \cite{DP}. It makes use of  repeated application of Stinespring's
theorem and is essentially  equivalent to first constructing the
weak Markov flow and then writing down the endomorphism. The
increasing family of projections constructed here  (which increases
to identity)  is the filtration of quantum Markov process theory
(See \cite{BP94}, \cite{BP95}). Some alternative
versions/presentations can be seen in (\cite{Ar}, \cite{MS02},
\cite{Sa}). The literature on dilation theory of UCP maps is vast
and we have cited only a small sample relevant for our current
purposes. Here we have confined ourselves to the von Neumann algebra
version, as that is convenient and  seems to be of most interest.

 We will call the triple $({\mathcal K}, {\mathcal B}, \theta )$ of
this theorem as the minimal dilation of $\tau .$ We will repeatedly
use the following two facts of dilation theory:

(i) For  every $n\in {\mathbb Z}_+$, $p\leq \theta ^n(p),$ and
$\theta ^n(p)$ increases in strong operator topology to the identity
of ${\mathcal K}.$ (ii) For any $z\in {\mathcal B}$, $p\theta (z)p=
\tau (pzp).$

We have restricted our focus to dynamics in discrete time in this
article. But it is clear that it is possible to develop analogous
theory in continuous time.

\section{Main Theorem}

We assume the following setup and notation for whole of this
Section. Let  ${\mathcal A}\subseteq B({\mathcal H})$ be a von
Neumann algebra and let $\tau :{\mathcal A}\to {\mathcal A}$ be a
normal unital completely positive map. Let $({\mathcal K}, {\mathcal
B}, \theta )$, be the minimal dilation of $\tau $ guaranteed by
Theorem \ref{Dilation}. Note that the orthogonal  projection $p$ of
${\mathcal K}$ onto  ${\mathcal H}$  is the identity of ${\mathcal
A}$.

Our first objective is to connect the peripheral eigenvectors of
$\tau $ with that of $\theta .$ Then we use the $*$-endomorphism
property of $\theta $ to get a $C^*$-algebra structure on the
operator space generated by these eigenvectors. To begin with we
have the following simple observation that peripheral eigenvectors
of $\tau $ can be lifted to that of $\theta $ and peripheral
eigenvectors of $\theta $ can be compressed to that of $\tau .$

\begin{lemma}\label{TIsOnto}
For every $x \in {\mathcal A}$ satisfying $\tau(x) = \lambda x$ with
$\lambda \in \T$, there exists  unique $\hat{x} \in {\mathcal B}$
such that $\theta(\hat{x}) = \lambda \hat{x}$ and $p \hat{x} p = x$.
It is given by \begin{equation}\label{lifting} \hat{x}= s-\lim
_{n\to \infty}\lambda^{-n} \theta^n(x).\end{equation} Conversely if
$\hat{x}\in {\mathcal B}$ satisfies $\theta (\hat{x}) = \lambda
\hat{x}$ for some $\lambda \in {\mathbb C}$, then $\tau (x)=\lambda
x$, where $x=p\hat{x}p.$
\end{lemma}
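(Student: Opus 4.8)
\textbf{Proof proposal for Lemma \ref{TIsOnto}.}

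The plan is to treat the two directions separately, with the forward direction (lifting) carrying almost all the work. For the forward direction, fix $x\in{\mathcal A}$ with $\tau(x)=\lambda x$, $\lambda\in{\mathbb T}$, and set $y_n:=\lambda^{-n}\theta^n(x)\in{\mathcal B}$. The first step is to show $(y_n)$ is strongly convergent. The key device is the compression identity $p\,\theta(z)\,p=\tau(pzp)$ together with $p\le\theta^n(p)$. I would first check the ``telescoping'' relation $p\,y_{n+1}\,p = \lambda^{-(n+1)}p\,\theta^{n+1}(x)\,p = \lambda^{-(n+1)}\tau^{n+1}(x) = \lambda^{-(n+1)}\lambda^{n+1}x = x$ for every $n$, and more generally, using $\theta^n(p)\ge p$ and the endomorphism property, that $\theta^n(p)\,y_{n+k}\,\theta^n(p)$ stabilizes: applying $\theta^n$ to the relation $p\,\lambda^{-k}\theta^k(x)\,p = x$ gives $\theta^n(p)\,y_{n+k}\,\theta^n(p)=\theta^n(x)\lambda^{-n}=y_n$ for all $k\ge 0$. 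Since $\theta^n(p)\uparrow 1$ strongly and $\|y_m\|\le\|x\|$ for all $m$ (because $\theta$ is unital and $*$-preserving, hence contractive, and $|\lambda|=1$), a standard $\varepsilon/3$ estimate on vectors $\xi\in{\mathcal K}$ shows that $(y_m)$ is strongly Cauchy and its strong limit $\hat x$ satisfies $\theta^n(p)\,\hat x\,\theta^n(p)=y_n$ for each $n$; in particular $p\hat x p=y_0=x$.

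Next I would verify the eigenvector relation $\theta(\hat x)=\lambda\hat x$. Since $\theta$ is normal it is strongly continuous on bounded sets, so $\theta(\hat x)=s\text{-}\lim_n\theta(y_n)=s\text{-}\lim_n\lambda^{-n}\theta^{n+1}(x)=\lambda\cdot s\text{-}\lim_n\lambda^{-(n+1)}\theta^{n+1}(x)=\lambda\hat x$. For uniqueness: if $\hat x'\in{\mathcal B}$ also satisfies $\theta(\hat x')=\lambda\hat x'$ and $p\hat x' p=x$, then $\theta^n(\hat x')=\lambda^n\hat x'$, so $\theta^n(p)\hat x'\theta^n(p)=\theta^n(p\hat x' p)=\lambda^n\theta^n(x)\cdot\lambda^{-n}\cdot\lambda^n$... more cleanly, $\theta^n(p\hat x'p)=\theta^n(x)$ while also $\theta^n(p\hat x'p)=\theta^n(p)\theta^n(\hat x')\theta^n(p)=\lambda^n\theta^n(p)\hat x'\theta^n(p)$, hence $\theta^n(p)\hat x'\theta^n(p)=\lambda^{-n}\theta^n(x)=y_n\to\hat x$ strongly; but the left side also converges strongly to $\hat x'$ since $\theta^n(p)\uparrow 1$, forcing $\hat x'=\hat x$.

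The converse direction is the routine one: if $\hat x\in{\mathcal B}$ satisfies $\theta(\hat x)=\lambda\hat x$ and $x=p\hat x p$, then by fact (ii) of dilation theory $\tau(x)=\tau(p\hat x p)=p\,\theta(\hat x)\,p=p(\lambda\hat x)p=\lambda x$, which is all that is claimed (no $\lambda\in{\mathbb T}$ hypothesis is needed here). I expect the main obstacle to be the strong-convergence argument for $(y_n)$: one must leverage $\theta^n(p)\uparrow 1$ correctly, noting that for a fixed vector $\xi$ and large $n$ the vectors $\theta^n(p)\xi$ approximate $\xi$, and that the stabilization identity $\theta^n(p)y_{n+k}\theta^n(p)=y_n$ pins down the limit on the range of $\theta^n(p)$; combining these with uniform boundedness $\|y_m\|\le\|x\|$ gives Cauchyness. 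Everything else is a direct application of normality of $\theta$, the relation $p\theta(z)p=\tau(pzp)$, and $\tau^n(x)=p\theta^n(x)p$.
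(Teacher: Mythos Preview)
Your proposal follows exactly the paper's route: establish the martingale/stabilization identity $\theta^m(p)\,y_n\,\theta^m(p)=y_m$ for $m\le n$, use it together with $\theta^n(p)\uparrow 1$ and the uniform bound $\|y_n\|\le\|x\|$ to get strong convergence, then read off $p\hat x p=x$ and $\theta(\hat x)=\lambda\hat x$, with uniqueness and the converse handled just as you describe.

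One point deserves a sharper formulation. Labeling the Cauchy step a ``standard $\varepsilon/3$'' slightly undersells what is needed: the stabilization identity only controls the \emph{compressed} quantity $\theta^N(p)\,y_m\,\theta^N(p)$, not $y_m\,\theta^N(p)$ itself, so knowing $y_m$ on the range of $\theta^N(p)$ after compression does not by itself make $y_m h_0$ eventually constant. The paper closes this by the Pythagorean observation that, since $y_m=\theta^m(p)y_m\theta^m(p)$, one has $(y_n-y_m)h_0\perp y_m h_0$ for $n\ge m\ge N$ and $h_0=\theta^N(p)h$, whence $\|y_n h_0\|^2=\|y_m h_0\|^2+\|(y_n-y_m)h_0\|^2$; the norms $\|y_n h_0\|^2$ are then increasing and bounded by $\|x\|^2\|h_0\|^2$, which is what actually yields Cauchyness of $y_n h_0$. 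Once you make this explicit, your argument is complete and matches the paper's.
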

\begin{proof}
Let us prove the existence of $\hat{x}$ as above.  Note that here,
as $x\in {\mathcal A}=p{\mathcal B}p$, we have $x=pxp$.   Take
$x_0=x$. For $n\geq 1,$ set $x_n =\lambda ^{-n}\theta ^n (x)=
\lambda^{-n} \theta^n (pxp)$. We claim that $\{x_n\}_{n\geq 0}$
forms a {\em martingale\/}  with respect to the filtration $\{\theta
^m(p):m\geq 0\}$, that is,  for $0 \leq m \leq n,$
\begin{equation}\label{consistency}\theta^m(p) x_n \theta^m(p) =
x_m.\end{equation} This can be seen easily by dilation property:
\begin{eqnarray*}
\theta^m(p) x_n \theta^m(p) &=&  \theta^m(p) \lambda^{-n} \theta^n
(pxp) \theta^m(p)
= \lambda^{-n} \theta^m(p)  \theta^n (pxp) \theta^m(p)\\
&=& \lambda^{-n}  \theta^m (p \theta^{n-m}(pxp) p) = \lambda^{-n}  \theta^m (p \tau^{n-m}(x)p) \\
&=& \lambda^{-n}  \theta^m (p \lambda^{n-m} xp) = \lambda^{-n} \lambda^{n-m}  \theta^m (pxp) \\
&=& \lambda^{-m} \theta^m (pxp)  = x_m.
\end{eqnarray*}
As $|\lambda |=1$ and $\theta $ is contractive,
$\{x_n\}_{n \geq 0}$ is a bounded sequence. Now as $\{ \theta
^m(p)\}_{m\geq 0} $ is an increasing family of projections
increasing to identity, for $h\in {\mathcal K}$ and $\epsilon >0$,
we can choose $m$ such that $\|h-\theta ^m(p)h\|<\epsilon$. Taking
$h_0=\theta ^m(p)h$, from (\ref{consistency}), for $n\geq k \geq m$,
$x_kh_0$ and $x_nh_0- x_kh_0$ are mutually orthogonal and hence
$\{\|x_nh_0\|^2 \}_{n\geq m}$ is an increasing sequence bounded by
$\|x\|^2\|h_0\|^2$. Further, equation (\ref{consistency})implies,
$\|(x_n-x_m)h_0\|^2= \|x_nh_0\|^2-\|x_mh_0\|^2,~~\forall n\geq m.$
It follows that $\{x_nh_0\}_{n\geq 0}$ is a Cauchy sequence. As
$\epsilon >0$ was arbitrary,  $\{x_nh\}_{n\geq 0}$ is also a Cauchy
sequence. Taking $\hat{x}h= \lim _{n\to \infty}x_nh$, as
$\|x_nh\|\leq \|x\|\|h\|,$ for all $n$, we get a bounded operator
$\hat{x}=s-\lim _{n\to \infty}x_n$ in ${\mathcal B}$ satisfying
$\theta ^m(p)\hat{x}\theta ^m(p)=x_m.$ Note that
\begin{eqnarray*}
\theta(\hat{x})& =& \theta(s-\lim _{n\to \infty} x_n) =
\theta(s-\lim _{n\to \infty} \lambda^{-n} \theta^n (pxp))  =  s-\lim
_{n\to \infty} \lambda^{-n} \theta^{n+1} (pxp)\\& =& \lambda (s-\lim
\lambda^{-n-1} \theta^{n+1} (pxp)) = \lambda \hat{x}.\end{eqnarray*}
Further, for $m=0$, taking limit as $n\to \infty$ in equation
(\ref{consistency}), $p\hat{x}p=x.$ This proves the existence of
$\hat{x}$

 Let us now prove the uniqueness. Suppose that there exists $y' \in
{\mathcal B}$ such that $\theta(y') = \lambda y'$ and $py'p = x$.
Then since $\theta ^n(p)$ increases to identity as $n$ increases to
infinity, we get
\begin{eqnarray*} y'& =& s-\lim _{n\to \infty} \theta^n(p) y'
\theta^n(p) = s-\lim _{n\to \infty} \lambda^{-n} \theta^n(p)
\theta^n(y') \theta^n(p)\\& = & s-\lim _{n\to \infty} \lambda^{-n}
\theta^n(py'p) = s-\lim _{n\to \infty}\lambda^{-n} \theta^n(x) =
\hat{x}.\end{eqnarray*}  This completes the proof of the first part.
Now suppose $\hat{x}\in {\mathcal B}$ satisfies $\theta (\hat{x})
=\lambda \hat{x}$ for some $\lambda \in {\mathbb C}$.  Take
$x=p\hat{x}p.$ As $\theta (p)\geq p$, we get
$$\tau(x)= \tau(p\hat{x}p)= p\theta (p\hat{x}p)p= p\theta (p)\theta(\hat{x})\theta(p)p=
p\theta(\hat{x})p=\lambda p\hat{x}p=\lambda x.$$ \end{proof}
It is to be noted that in this Lemma, $\hat{x}\neq 0$ as
$p\hat{x}p=x$. So we have  lifted eigenvectors of $\tau $ to that of
$\theta $. In the computations we have crucially used the fact that
we are considering peripheral eigenvalues. Later on in the Hardy
space example of Section 3, we see  that it is possible to have
$\lambda $ in the point spectrum of $\tau $, which are not in that
of $\theta $ for $|\lambda |<1$.

Consider the compression map $T:{\mathcal B}\to {\mathcal A}$
defined by
$$T(z)=pzp.$$
Clearly $T$ is completely positive and unital. The previous Lemma
shows that for $\lambda \in {\mathbb T}, $ $T$ from  $E_{\lambda
}(\theta )$ to $E_{\lambda }(\tau )$ is a bijection. Now we wish to
show that $T$ maps $E(\theta )$ isometrically to $E(\tau ).$  For
the purpose, we need the following result on rotations of the
$d$-torus. It is perhaps a folklore result in the theory of
dynamical systems. However, we could not find a suitable reference.
For readers convenience we present it in a purely Hilbert space
language and provide an elementary operator theoretic proof. It
maybe noted that the standard result that irrational rotations on
the torus have dense orbit (See Lemma 8.1.17 of \cite{GKPT})  is a
simple special case of this result.

\begin{lemma}\label{dynamics}
Suppose $U$ is a unitary on a finite dimensional Hilbert space. Then
there exists a subsequence of $\{U^n\}_{n\in {\mathbb N}}$
converging to the identity in norm. In particular, for $d\in
{\mathbb N}$, if $\lambda _1, \ldots , \lambda _d$ are some
$d$-points in the unit circle ${\mathbb T}$, then there exists a
subsequence of $\{(\lambda _1^n, \ldots , \lambda _d^n)\}_{n\in
{\mathbb N}}$ converging to $(1, \ldots , 1)$ in ${\mathbb C}^d.$
\end{lemma}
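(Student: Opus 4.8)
The statement is about a unitary $U$ on a finite-dimensional space $V$, and we want a subsequence of powers $U^{n}$ converging to $I$ in norm. The most natural approach is via the spectral theorem plus simultaneous Diophantine approximation. Since $V$ is finite-dimensional, $U$ diagonalizes: there is an orthonormal basis of eigenvectors with eigenvalues $\mu_1,\dots,\mu_r \in \mathbb{T}$. Writing $\mu_j = e^{2\pi i \alpha_j}$, the operator $U^{n}$ in this basis is the diagonal matrix with entries $e^{2\pi i n \alpha_j}$, so $\|U^{n}-I\|$ equals $\max_j |e^{2\pi i n\alpha_j}-1|$. Thus it suffices to find a subsequence $n_k \to \infty$ along which all the fractional parts $\{n_k \alpha_j\}$ tend to $0$ simultaneously. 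This is exactly simultaneous Diophantine approximation of the vector $(\alpha_1,\dots,\alpha_r)$, and it follows from the pigeonhole principle (Dirichlet): partition the torus $\mathbb{T}^r$ into $N^r$ boxes of side $1/N$; among $N^r+1$ of the points $(\{n\alpha_1\},\dots,\{n\alpha_r\})$, $n = 0,1,\dots,N^r$, two land in the same box, and their difference gives an integer $m$ with $1 \le m \le N^r$ and $\|m\alpha_j\| < 1/N$ for all $j$. Letting $N \to \infty$ produces the desired subsequence.

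Since the paper says it wants "a purely Hilbert space language" and "an elementary operator theoretic proof," I would phrase the argument without ever invoking fractional parts explicitly. First I would reduce to the diagonal case by the spectral theorem for the unitary $U$ on the finite-dimensional space. Then, for the core pigeonhole step, I would work directly with the vectors $(U^{n}v_1,\dots,U^{n}v_r)$ where $v_1,\dots,v_r$ is the eigenbasis, or even more cleanly: consider the sequence $(U^{n})_{n\ge 0}$ inside the compact group $\mathcal{U}(V)$ of unitaries on $V$ (compact because $V$ is finite-dimensional); by compactness some subsequence $U^{n_k}$ converges in norm to a unitary $W$. Then $U^{n_{k+1}-n_k} = U^{n_{k+1}}(U^{n_k})^{-1} \to W W^{-1} = I$ in norm (using that inversion and multiplication are norm-continuous on $\mathcal{U}(V)$, again by finite-dimensionality). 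Passing to a further subsequence if necessary to ensure $n_{k+1}-n_k \to \infty$, we obtain the claim. This compactness route is shorter and avoids the box-counting entirely.

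For the "in particular" clause, I would simply apply the first part to the diagonal unitary $U = \mathrm{diag}(\lambda_1,\dots,\lambda_d)$ on $\mathbb{C}^d$: a subsequence $U^{n_k}\to I$ in norm says precisely that $(\lambda_1^{n_k},\dots,\lambda_d^{n_k}) \to (1,\dots,1)$ in $\mathbb{C}^d$, since the $j$-th diagonal entry of $U^{n_k}-I$ is $\lambda_j^{n_k}-1$. The remark about irrational rotations having dense orbits then follows by noting that if the orbit closure of a point under rotation by $\lambda$ contains $1$ as a limit of $\{\lambda^{n_k}\}$ with $n_k\to\infty$, one can translate to see every point of $\mathbb{T}$ is approximated.

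The main (and really the only) obstacle is organizing the pigeonhole/compactness step so that the extracted subsequence has indices tending to infinity — one must be slightly careful that $U^{n_{k+1}-n_k} \to I$ uses genuine differences of indices, and that after thinning we keep $n_{k+1}-n_k \to \infty$ rather than accidentally producing the trivial subsequence $n_k \equiv 0$. Everything else — the spectral theorem, norm-continuity of group operations on $\mathcal{U}(V)$, compactness of $\mathcal{U}(V)$ — is standard finite-dimensional linear algebra and requires no delicate estimate.
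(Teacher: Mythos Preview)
Your proposal is correct, and the compactness route you settle on --- extract a convergent subsequence of $\{U^n\}$ in the compact unitary group and look at differences of indices --- is exactly the paper's proof (the paper phrases it via the Cauchy criterion, $\|U^{n_l-n_m}-I\|=\|(U^{n_l}-U^{n_m})U^{-n_m}\|<\epsilon$, which is your argument without ever naming the limit $W$). Your Diophantine/pigeonhole alternative is a valid second route but not the one the paper takes, and your deduction of the ``in particular'' clause from the diagonal unitary $\mathrm{diag}(\lambda_1,\dots,\lambda_d)$ matches the paper verbatim.
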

\begin{proof}
Since the set of unitaries on a finite dimensional Hilbert space is
closed and bounded and hence compact, the sequence $\{U^n\}_{n\in
{\mathbb N}}$ has a convergent subsequence, say $\{U^{n_m}\}_{m\in
{\mathbb N}}.$ Then by Cauchy property for $\epsilon >0$, there
exists $K$, such that for $l>m>K$, $\|U^{n_l}-U^{n_m}\|<\epsilon .$
This implies,
$$\|U^{n_l-n_m}-I\|\leq \|(U^{n_l}-U^{n_m})U^{-n_m}\|\leq
\|U^{n_l}-U ^{n_m}\|<\epsilon .$$ Now the result is clear. The
second part follows from considering the  diagonal unitary with
diagonal entries equal to $\lambda _1, \ldots , \lambda _d.$
\end{proof}

We remark in the passing that Arveson (\cite{Ar1}, Lemma 2.9) has a
result in the converse direction, namely that in any Banach space if
identity is in the strong closure of  $\{U, U^2, \ldots \}$ for some
linear contraction $U$, then $U$ is an automorphism. We will not
 need this result.

Consider the operator spaces:
$${\mathcal P}(\tau ):=\overline{ E(\tau )}= \overline{{\mbox{span}}}\{x\in {\mathcal A}:
\tau (x)=\lambda x, \lambda \in {\mathbb T}\};$$
$${\mathcal P}(\theta ):= \overline{E(\theta )}= \overline{{\mbox{span}}}\{x\in {\mathcal B}:
\theta (x)=\lambda x, \lambda \in {\mathbb T}\},$$ where the
closures are taken with respect to the operator norm. Clearly
${\mathcal P}(\tau ), {\mathcal P}(\theta )$ are operator spaces. In
fact as they are unital and $*$-closed they form operator systems.
Since $\theta $ is an endomorphism, if $\theta (x)=\lambda x$ and
$\theta (y)=\mu y$, then $\theta (xy)= \lambda .\mu (xy).$ This
shows that ${\mathcal P}(\theta )$ is a unital $C^*$-algebra. Here
is our main result.

\begin{theorem}\label{main}
 Let ${\mathcal A}\subseteq B({\mathcal
H})$ be a von Neumann algebra and let $\tau :{\mathcal A}\to
{\mathcal A}$ be a normal unital completely positive map. Let
$({\mathcal K}, {\mathcal B}, \theta )$, be the minimal dilation of
$\tau .$ Let $T$ denote the compression map $z\mapsto pzp$
restricted to ${\mathcal P}(\theta ).$  Then the completely positive
map $T$ maps the $C^*$-algebra ${\mathcal P}(\theta )$ isometrically
and bijectively to ${\mathcal P}(\tau ).$  In particular, setting
\begin{equation}\label{newproduct}
x\circ y := T (T^{-1}(x)T^{-1}(y)) ,~~\forall x,y \in {\mathcal
P}(\tau ),\end{equation} makes $({\mathcal P}(\tau ), \circ )$  a
unital $C^*$-algebra. Moreover, the map $T$ is a complete isometry.
\end{theorem}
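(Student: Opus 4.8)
The plan is to show that $T$ restricted to ${\mathcal P}(\theta)$ is an isometric $*$-linear bijection onto ${\mathcal P}(\tau)$, and then transport the $C^*$-structure of ${\mathcal P}(\theta)$ across this map via the formula \eqref{newproduct}. The only genuine work is the isometry and surjectivity of $T$ on these operator systems; everything after that is formal.

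\textbf{Step 1: $T$ maps ${\mathcal P}(\theta)$ into ${\mathcal P}(\tau)$.} By the converse part of \Cref{TIsOnto}, if $\theta(\hat x) = \lambda \hat x$ with $\lambda \in {\mathbb T}$ then $\tau(p\hat x p) = \lambda p\hat x p$, so $T(E_\lambda(\theta)) \subseteq E_\lambda(\tau)$. Since $T$ is linear and norm-contractive, it maps $E(\theta)$ into $E(\tau)$ and hence $\overline{E(\theta)} = {\mathcal P}(\theta)$ into $\overline{E(\tau)} = {\mathcal P}(\tau)$. Surjectivity onto $E(\tau)$ is exactly the forward direction of \Cref{TIsOnto}: each eigenvector $x \in E_\lambda(\tau)$ has the lift $\hat x \in E_\lambda(\theta)$ with $p\hat x p = x$; as this is linear in $x$, all of $E(\tau)$ is hit, and then by the isometry statement below $T({\mathcal P}(\theta))$ is closed, hence equals ${\mathcal P}(\tau)$.

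\textbf{Step 2 (the main point): $T$ is isometric on ${\mathcal P}(\theta)$.} Since $T$ is UCP it is contractive, so I must show $\|p\hat z p\| \geq \|\hat z\|$ for $\hat z \in {\mathcal P}(\theta)$. It suffices to prove this for $\hat z \in E(\theta)$, a finite sum $\hat z = \sum_{j=1}^d \hat z_j$ with $\theta(\hat z_j) = \lambda_j \hat z_j$ and distinct $\lambda_j \in {\mathbb T}$; the general case follows by density. Given $\epsilon > 0$, pick a unit vector $h_0$ in the range of some $\theta^m(p)$ with $\|\hat z h_0\| > \|\hat z\| - \epsilon$ (possible since $\theta^m(p) \uparrow I$ strongly). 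Now apply \Cref{dynamics} to the $d$-tuple $(\lambda_1,\dots,\lambda_d)$: there is a subsequence $n_k \to \infty$ with $\lambda_j^{n_k} \to 1$ for every $j$. For each such $n_k \geq m$,
\[
\theta^{n_k}(\hat z) = \sum_{j=1}^d \lambda_j^{n_k}\,\theta^{n_k}(\hat z_j) \longrightarrow \sum_j \theta^{n_k}(\hat z_j)
\]
is not quite right — instead, compare $\theta^{n_k}(\hat z)$ with $\sum_j \lambda_j^{-n_k}\theta^{n_k}(\hat z_j) = \hat z$; since $\theta$ is a $*$-endomorphism it is isometric, so $\|\theta^{n_k}(\hat z_j)\| = \|\hat z_j\|$, and thus $\|\theta^{n_k}(\hat z) - \hat z\| = \|\sum_j(\lambda_j^{n_k} - 1)\theta^{n_k}(\hat z_j)\| \leq \sum_j |\lambda_j^{n_k} - 1|\,\|\hat z_j\| \to 0$. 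Therefore, using $\theta^{n_k}(p) \geq \theta^m(p)$ so that $\theta^{n_k}(p) h_0 = h_0$, and the dilation identity $p\,\theta^{n_k}(\hat z)\,p$ compressed appropriately,
\[
\|p\hat z p\| \;\geq\; \|\theta^{n_k}(p)\,\theta^{n_k}(\hat z)\,\theta^{n_k}(p)\big|_{\text{range }\theta^{n_k}(p)}\| \cdot (\text{isometry of }\theta^{n_k}\text{ on the corner})
\]
which I'll make precise via $\theta^{n_k}(p) \theta^{n_k}(\hat z)\theta^{n_k}(p) = \theta^{n_k}(p\hat z p)$ and $\|p\hat z p\| = \|\theta^{n_k}(p\hat z p)\|$. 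Combining: $\|p\hat z p\| = \|\theta^{n_k}(p\hat z p)\| \geq \|\theta^{n_k}(p\hat zp)h_0\|$, and $\theta^{n_k}(p\hat z p)h_0 = \theta^{n_k}(p)\theta^{n_k}(\hat z)\theta^{n_k}(p)h_0 = \theta^{n_k}(p)\theta^{n_k}(\hat z)h_0 \to \hat z h_0$ (using $\theta^{n_k}(\hat z) \to \hat z$ in norm and $\theta^{n_k}(p) \to I$ strongly with $\hat z h_0$ a fixed vector). Hence $\|p\hat z p\| \geq \|\hat z h_0\| > \|\hat z\| - \epsilon$, and letting $\epsilon \to 0$ gives $\|p\hat z p\| \geq \|\hat z\|$.

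\textbf{Step 3: transport the $C^*$-structure.} By Steps 1 and 2, $T \colon {\mathcal P}(\theta) \to {\mathcal P}(\tau)$ is a $*$-linear isometric bijection (it is $*$-preserving since $(p\hat z p)^* = p\hat z^* p$ and ${\mathcal P}(\theta)$ is $*$-closed). Define $\circ$ on ${\mathcal P}(\tau)$ by \eqref{newproduct}. Then $T$ becomes a $*$-algebra isomorphism from the $C^*$-algebra $({\mathcal P}(\theta),\cdot)$ onto $({\mathcal P}(\tau),\circ)$: bilinearity, associativity, the unit ($T(1_{\mathcal B}) = p$, which is $1_{\mathcal A}$), and $(x\circ y)^* = y^* \circ x^*$ all follow by pushing the corresponding identities in ${\mathcal B}$ through $T$ and $T^{-1}$. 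Since ${\mathcal P}(\theta)$ is a $C^*$-algebra (noted just before the theorem, because $\theta$ multiplicative forces $\theta(\hat x \hat y) = \lambda\mu\,\hat x\hat y$) and $T$ is an isometric $*$-isomorphism of $*$-algebras, the norm inherited by $({\mathcal P}(\tau),\circ)$ — which coincides with the original operator norm — satisfies the $C^*$-identity, so $({\mathcal P}(\tau),\circ)$ is a unital $C^*$-algebra.

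\textbf{Expected main obstacle.} The crux is Step 2, and within it the passage from a single eigenvector to a finite sum of eigenvectors with distinct peripheral eigenvalues: one needs the simultaneous approximation $\lambda_j^{n_k} \to 1$ for all $j$ at once, which is precisely what \Cref{dynamics} supplies. The bookkeeping with the increasing projections $\theta^{n}(p)$ — ensuring the test vector $h_0$ is eventually fixed by them and that strong convergence $\theta^{n_k}(p) \to I$ may be combined with the norm convergence $\theta^{n_k}(\hat z) \to \hat z$ on the fixed vector $\hat z h_0$ — is the delicate part, but it is routine once the right quantities are isolated.
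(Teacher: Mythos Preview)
Your argument is correct and follows essentially the same route as the paper's: reduce to a finite sum of peripheral eigenvectors, invoke \Cref{dynamics} to get $\lambda_j^{n_k}\to 1$ simultaneously, and combine this with $\theta^n(p)\uparrow I$ to show the compression is isometric on $E(\theta)$; the paper organizes the estimate via the lifting formula $\hat z_j = s\text{-}\lim \lambda_j^{-n}\theta^n(x_j)$, whereas you use directly that $\theta^{n_k}(\hat z)=\sum_j\lambda_j^{n_k}\hat z_j\to \hat z$ in norm together with $\theta^{n_k}(p\hat z p)=\theta^{n_k}(p)\theta^{n_k}(\hat z)\theta^{n_k}(p)$, which is a minor reorganization of the same idea. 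One small slip: you assert that a $*$-endomorphism is isometric, but without injectivity $\theta$ is only guaranteed to be contractive; fortunately your argument only needs $\|\theta^{n_k}(p\hat z p)\|\le \|p\hat z p\|$ (and $\|\theta^{n_k}(\hat z_j)\|=\|\hat z_j\|$ follows anyway from $\theta^{n_k}(\hat z_j)=\lambda_j^{n_k}\hat z_j$), so the proof goes through unchanged once you replace the unjustified ``$=$'' by ``$\ge$''.
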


\begin{proof}
From Lemma \ref{TIsOnto} we know that $T$ maps $E_{\lambda }(\theta
)$ bijectively to $E_{\lambda }(\tau )$ for every $\lambda \in
{\mathbb T}.$ Consequently, $T$ maps $E(\theta)$ surjectively to
$E(\tau)$. Recall that for any linear map eigenvectors
corresponding to distinct eigenvalues are linearly independent.
Hence any non-zero element $y$ in $E(\theta)$ can be written
uniquely as $y = \sum _{j=1}^dy_j$ for some $0\neq y_j\in E_{\lambda
_j}(\theta )$ for some distinct $\lambda _1, \ldots , \lambda _d$ in
${\mathbb T}$, $d\in {\mathbb N}.$  Take $x_j=py_jp, ~1\leq j\leq d,
$ and $x=pyp$. As $p$ is a projection,  $\|x\|\leq \|y\|.$ The
construction in the proof of Lemma \ref{TIsOnto},  tells us that
$$y_j= s-\lim _{n\to \infty}\lambda _j^{-n}\theta ^n(x_j), ~~1\leq j\leq d.$$
Hence, \begin{equation}\label{limit} y = s-\lim_{n\to
\infty}\sum_{j=1}^d \lambda _j^{-n}\theta ^n(x_j).\end{equation}
 Consider any $h$ in the Hilbert space $ {\mathcal K}$ with $\|h\|= 1$ and let $\epsilon >0$.
By the convergence in  \ref{limit}, there exists $N$ such that for
all $n\geq N$,
$$\|yh-\sum _{j=1}^d\lambda _j^{-n}\theta ^n(x_j)h\|<\frac{\epsilon}{2} .$$
In view of Lemma \ref{dynamics}, we can find $m>N$ such that
$$|\ol{\lambda _j}^{m}-1|< \frac{\epsilon}{2\sum _{k=1}^d\|x_k\|}, ~~\forall 1\leq j\leq d.$$
As $\theta $ is an endomorphism and $\|h\|=1$, $\|\theta
^m(x_j)h\|\leq \|x_j\|$ and hence
$$\|\sum _{j=1}^d(\lambda _j^{-m}-1)\theta
^m(x_j)h\|<\frac{\epsilon}{2}.$$ Combining with the previous
inequality at $n=m$,
$$\|yh\|\leq \| \sum _{j=1}^d\theta ^m(x_j)\|+\epsilon =\| \theta ^m(\sum
_{j=1}^dx_j)\| +\epsilon =\|\theta ^m(x)\|+\epsilon \leq
\|x\|+\epsilon .$$ As this is true for all $\epsilon >0$, $\|y\|\leq
\|x\|\leq \|y\|.$ This proves that $T$ is isometric on $E(\theta ).$
In particular, $T$ is injective and also it extends to an isometric
bijection from ${\mathcal P}(\theta )$  to ${\mathcal P}(\tau ).$
The second part is clear, as ${\mathcal P}(\theta )$ is a  unital
$C^*$-algebra. The complete isometry property of $T$ is shown in the
Appendix. \end{proof}

\begin{defin}
 Let ${\mathcal A}\subseteq B({\mathcal
H})$ be a von Neumann algebra and let $\tau :{\mathcal A}\to
{\mathcal A}$ be a normal unital completely positive map. Then the
$C^*$-algebra ${\mathcal P}(\tau )$ constructed above is called the
{\em peripheral Poisson boundary\/} of $\tau .$
\end{defin}

We could have called this as `{\em non-commutative peripheral
Poisson boundary}'. The word `non-commutative' has been dropped for
brevity as it is clear from the context. We have defined the
peripheral Poisson boundary using the minimal dilation. It is
possible to write down the algebra structure without referring to
the minimal dilation as below. However we do not know as to how to
prove the existence of strong operator topology limit in this
theorem without using dilation. Initial formulae for Choi-Effros
product included limits over ultra-filters. It was W. Arveson and M.
Izumi who first saw that a much simpler formula can be provided
using dilation theory (See the Appendix of  \cite{Izumi2012}). Here
we have similar formula for peripheral Poisson boundary.

\begin{theorem}\label{formula}
Under the setting of Theorem \ref{main}, for $x,y\in {\mathcal A}$
with $\tau (x)=\lambda x, \tau(y)=\mu y,~\lambda , \mu \in {\mathbb
T}$, \begin{equation}\label{productformula} x\circ y = s-\lim _{n\to
\infty} (\lambda \mu )^{-n}\tau ^n(xy).\end{equation} If there
exists a faithful state left invariant by $\tau $, then $x\circ
y=xy.$
\end{theorem}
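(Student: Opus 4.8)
The plan is to compute $x \circ y$ by unwinding Definition \eqref{newproduct} using the explicit lifts furnished by Lemma \ref{TIsOnto}. Write $\hat x = T^{-1}(x) = s\text{-}\lim_n \lambda^{-n}\theta^n(x)$ and $\hat y = T^{-1}(y) = s\text{-}\lim_n \mu^{-n}\theta^n(y)$, which lie in $E_\lambda(\theta)$ and $E_\mu(\theta)$ respectively. Since $\theta$ is a $*$-endomorphism, $\theta(\hat x\hat y) = \theta(\hat x)\theta(\hat y) = \lambda\mu\,\hat x\hat y$, so $\hat x\hat y \in E_{\lambda\mu}(\theta)$. Then $x\circ y = T(\hat x\hat y) = p\hat x\hat y p$, and by the uniqueness clause of Lemma \ref{TIsOnto} (applied to the eigenvalue $\lambda\mu \in \mathbb T$) this compression equals $s\text{-}\lim_n (\lambda\mu)^{-n}\theta^n(p\hat x\hat y p)$. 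It remains to replace $\theta^n(p\hat x\hat y p)$ by something expressed purely in terms of $\tau$ applied to $xy$.

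The key identity to establish is $p\,\theta^n(p\hat x\hat y p)\,p = \tau^n(xy)$ for every $n$, after which compressing \eqref{productformula}'s candidate and the strong-limit formula for $T(\hat x\hat y)$ agree. First I would note $p\hat x\hat y p = p\hat x p \cdot p\hat y p + p\hat x(1-p)\hat y p = xy + p\hat x(1-p)\hat y p$; a cleaner route is to observe directly that $\tau^n(xy) = p\theta^n(xy)p = p\theta^n(p\,xy\,p)p$ is not quite what we want, so instead one argues at the level of the limit. Concretely, $(\lambda\mu)^{-n}\theta^n(p\hat x\hat y p)$ converges strongly, and compressing by $p$ commutes with the strong limit on bounded nets, giving $x\circ y = s\text{-}\lim_n (\lambda\mu)^{-n}p\,\theta^n(p\hat x\hat y p)\,p$. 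Now for the eigenvector $\hat x\hat y$ of $\theta$ with eigenvalue $\lambda\mu$, Lemma \ref{TIsOnto}'s martingale relation \eqref{consistency} gives $\theta^0(p)(\lambda\mu)^{-n}\theta^n(\hat x\hat y)\theta^0(p) = p\hat x\hat y p \cdot$ wait — more directly, $p\,\theta^n(\hat x\hat y)\,p = (\lambda\mu)^n p\,\hat x\hat y\,p = (\lambda\mu)^n\, x\circ y$ already, which is circular. The honest computation is: since $\hat x\hat y \in E_{\lambda\mu}(\theta)$ with compression $x\circ y$, Lemma \ref{TIsOnto} says $\hat x\hat y = s\text{-}\lim_n (\lambda\mu)^{-n}\theta^n(x\circ y)$, hence $p\hat x\hat y p = s\text{-}\lim_n (\lambda\mu)^{-n}p\theta^n(x\circ y)p = s\text{-}\lim_n(\lambda\mu)^{-n}\tau^n(x\circ y)$; combined with $\tau(x\circ y) = \lambda\mu(x\circ y)$ (which follows since $x\circ y \in E_{\lambda\mu}(\tau)$) this is consistent but still not \eqref{productformula}. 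The real content is that $\tau^n(xy)$ and $\tau^n(x\circ y)$ have the same strong limit after normalization, which I would get from $\theta^n(p)\theta^n(\hat x)\theta^n(p) = \theta^n(x)$-type relations: expand $\tau^n(xy) = p\theta^n(x y)p = p\theta^n(pxp\cdot pyp)p$ and compare with $p\theta^n(\hat x\hat y)p$ using that $\theta^n(\hat x) = \theta^n(p)\theta^n(\hat x)$-wise $\hat x$ and $x$ agree under enough compressions. I expect the main obstacle is precisely bookkeeping these compressions: showing $s\text{-}\lim_n (\lambda\mu)^{-n}(\theta^n(\hat x\hat y) - \theta^n(xy)) = 0$ after compressing by $p$, which should follow because $\hat x - x$ and $\hat y - y$ are ``orthogonal to $\mathcal H$'' in a suitable sense and $\theta^n(p) \nearrow 1$.

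For the second statement, suppose $\phi$ is a faithful normal (or merely faithful) state with $\phi\circ\tau = \phi$. I would use the standard fact that for a $\tau$-invariant faithful state, every peripheral eigenvector is ``multiplicative'' in the sense that $\tau(x^*x) = x^*x$ forces — more precisely, for $x \in E_\lambda(\tau)$ with $\lambda\in\mathbb T$, the Kadison–Schwarz inequality $\tau(x^*x) \geq \tau(x)^*\tau(x) = |\lambda|^2 x^*x = x^*x$ together with $\phi(\tau(x^*x)) = \phi(x^*x)$ and faithfulness of $\phi$ gives $\tau(x^*x) = x^*x$. Then the equality case of Kadison–Schwarz (the multiplicative domain argument) shows $\tau(zx) = \tau(z)\tau(x)$ and $\tau(xz) = \tau(x)\tau(z)$ for all $z \in \mathcal A$; applying this with $z = y \in E_\mu(\tau)$ yields $\tau(xy) = \tau(x)\tau(y) = \lambda\mu\, xy$, so $xy \in E_{\lambda\mu}(\tau)$ and the strong limit in \eqref{productformula} is just $xy$ itself. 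Thus $x\circ y = xy$ on eigenvectors, hence on all of $E(\tau)$ by bilinearity, hence on $\mathcal P(\tau)$ by continuity. The one point needing care here is justifying the equality case of Kadison–Schwarz in the von Neumann algebra setting (it is classical, going back to Choi), and noting that it suffices to treat $x$ lying in the $*$-closed span of peripheral eigenvectors since $E(\tau)$ is $*$-closed — each $x^*$ is again a peripheral eigenvector (for $\bar\lambda$), so the multiplicative-domain argument applies symmetrically.
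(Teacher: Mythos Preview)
Your treatment of the first part has a genuine gap. You correctly record $\hat{x} = s\text{-}\lim_n \lambda^{-n}\theta^n(x)$ and $\hat{y} = s\text{-}\lim_n \mu^{-n}\theta^n(y)$, but then you never multiply these two strong limits together. The paper's proof is essentially one line from that point: since strong-operator convergence respects products of uniformly bounded sequences and $\theta^n$ is multiplicative,
\[
\hat{x}\hat{y} \;=\; s\text{-}\lim_n \bigl(\lambda^{-n}\theta^n(x)\bigr)\bigl(\mu^{-n}\theta^n(y)\bigr) \;=\; s\text{-}\lim_n (\lambda\mu)^{-n}\theta^n(xy),
\]
and compressing by $p$ gives $x\circ y = p\hat{x}\hat{y}p = s\text{-}\lim_n (\lambda\mu)^{-n}p\theta^n(xy)p = s\text{-}\lim_n (\lambda\mu)^{-n}\tau^n(xy)$ by the dilation property. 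Instead you try to return to $xy$ from $x\circ y = p\hat{x}\hat{y}p$ by applying the lifting formula of Lemma~\ref{TIsOnto} to $\hat{x}\hat{y}$, but that produces $\theta^n(x\circ y)$, not $\theta^n(xy)$. The ``key identity'' you then posit, $p\,\theta^n(p\hat{x}\hat{y}p)\,p = \tau^n(xy)$, is in fact false in general: the left side equals $\tau^n(x\circ y) = (\lambda\mu)^n(x\circ y)$, and already at $n=0$ this is $x\circ y$, not $xy$. The subsequent attempts all circle around the same obstacle. The missing idea is precisely that $\theta$, unlike $\tau$, is a homomorphism, so the product of the approximants $\theta^n(x)\theta^n(y)$ collapses to $\theta^n(xy)$ with no bookkeeping of compressions required.

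Your argument for the second statement is correct and parallels the paper's. Both use Kadison--Schwarz together with faithfulness of the invariant state to obtain $\tau(y^*y)=\tau(y)^*\tau(y)$ for $y\in E_\mu(\tau)$. You then invoke Choi's multiplicative-domain theorem to conclude $\tau(xy)=\tau(x)\tau(y)=\lambda\mu\,xy$, whence \eqref{productformula} stabilises immediately; the paper instead rewrites $\tau(y^*y)=\tau(y)^*\tau(y)$ via the dilation as $(1-p)\theta(y)p=0$ and reads off $\tau(xy)=p\theta(x)\theta(y)p=p\theta(x)p\cdot p\theta(y)p=\tau(x)\tau(y)$. These are interchangeable standard arguments.
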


\begin{proof}
Take $\hat{x}=T^{-1}(x)$ and $\hat{y}=T^{-1}(y)$ in ${\mathcal B}.$
Then from Lemma \ref{TIsOnto}, we know that $\hat{x}=s-\lim _{n\to
\infty} \lambda ^{-n}\theta ^n(x)$ and $\hat{y}=s-\lim _{n\to \infty
}\mu ^{-n}\theta ^n(y)$. Since strong operator topology convergence
respects products of sequences, $\hat{x}\hat{y}=s-\lim _{n\to
\infty}{(\lambda \mu )}^{-n}\theta ^n(xy).$ Therefore,
\begin{eqnarray*}
x\circ y&=&  p\hat{x}.\hat{y}p.
\end{eqnarray*}
To see the second part let $\phi $ be a faithful state left
invariant by $\tau $. Consider $x\in E_{\lambda }(\tau ), y\in
E_{\mu }(\tau )$ with $\lambda , \mu \in {\mathbb T}.$ As $\tau $ is
UCP, for any $y\in {\mathcal A}$, $\tau (y^*y)\geq \tau (y)^*\tau
(y).$ Hence
$$\phi (\tau (y^*y)-\tau (y)^*\tau (y))=\phi (\tau (y^*y)-|\mu
|^2y^*y)= \phi (y^*y)-\phi (y^*y)=0.$$ The faithfulness of $\phi $
implies $\tau (y^*y)-\tau(y)^*\tau(y)=0.$ Then by dilation property
$p\theta (y)^*[p+(1-p)]\theta (y)p-p\theta (y)^*p\theta (y)p=0$ or
$(1-p)\theta (y)p=0$. Therefore, $\tau (xy)=p\theta
(x)[p+(1-p)]\theta (y)p=\tau(x)\tau (y)=\lambda \mu xy$ and the
result follows from the first part. \end{proof}

This result about UCP maps preserving faithful states has
applications to the theory of quantum channels. For instance, it
shows that in such cases products of two peripheral eigenvectors is
again a peripheral eigenvector.  We may also note the reverse
implication that if the product in the peripheral Poisson boundary
does not match with the ordinary product then no faithful state is
left invariant by the UCP map.

\begin{corollary}
If the von Neumann algebra ${\mathcal A}$ is abelian then the
peripheral Poisson boundary of $\tau :{\mathcal A}\to {\mathcal A}$
is also abelian.

\end{corollary}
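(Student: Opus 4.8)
The plan is to reduce the statement to checking commutativity on peripheral eigenvectors, where the explicit product formula of Theorem \ref{formula} makes the claim immediate.

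First I would note that the product $\circ$ is bilinear by its very definition (\ref{newproduct}), and that it is jointly norm-continuous on $\mathcal{P}(\tau)$ since $(\mathcal{P}(\tau),\circ)$ is a $C^*$-algebra (Theorem \ref{main}). Because $\mathcal{P}(\tau)=\overline{E(\tau)}$ and $E(\tau)$ is by definition the linear span of the eigenspaces $E_\lambda(\tau)$, $\lambda\in\mathbb{T}$, it is enough to prove $x\circ y=y\circ x$ for $x\in E_\lambda(\tau)$ and $y\in E_\mu(\tau)$ with $\lambda,\mu\in\mathbb{T}$; the general identity then follows by writing arbitrary elements of $\mathcal{P}(\tau)$ as norm-limits of finite sums of peripheral eigenvectors, expanding by bilinearity, and passing to the limit using joint continuity of $\circ$.

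For such $x$ and $y$, Theorem \ref{formula} gives $x\circ y = s-\lim_{n\to\infty}(\lambda\mu)^{-n}\tau^n(xy)$ and, symmetrically, $y\circ x = s-\lim_{n\to\infty}(\mu\lambda)^{-n}\tau^n(yx)$. Since $\mathcal{A}$ is abelian we have $xy=yx$ in $\mathcal{A}$, and $\lambda\mu=\mu\lambda$ in $\mathbb{T}$, so the two strong-operator limits coincide and $x\circ y=y\circ x$.

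I expect no real obstacle here; the only point worth flagging is that one should not attempt to deduce the corollary directly from the dilation, because the von Neumann algebra $\mathcal{B}$ of the minimal dilation need not be abelian even when $\mathcal{A}$ is — the commutativity is recovered only after compression to $\mathcal{A}$, which is precisely what the formula (\ref{productformula}) encodes. (One could instead phrase the density step through the isometric isomorphism $T\colon\mathcal{P}(\theta)\to\mathcal{P}(\tau)$, but it is cleaner to argue entirely inside $\mathcal{A}$ via (\ref{productformula}).)
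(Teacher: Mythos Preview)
Your proof is correct and follows essentially the same approach as the paper, which also invokes equation~(\ref{productformula}) to conclude $x\circ y = y\circ x$ from $xy = yx$ in $\mathcal{A}$; you have simply spelled out the routine bilinearity-and-density step that the paper leaves implicit.
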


\begin{proof}
If the algebra is abelian we get $x\circ y =y\circ x$ from equation
(\ref{productformula}). \end{proof}

\begin{corollary}
If $x\in E_{\lambda }(\tau )$ and $y\in E_{\mu}(\tau )$ then $x\circ
y\in E_{\lambda .\mu }(\tau )$. In particular, if $\lambda .\mu$ is
not in the point spectrum of $\tau$, then $x\circ y=0.$
\end{corollary}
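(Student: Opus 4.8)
The plan is to read the eigenvalue behaviour straight off the product formula in Theorem \ref{formula}, or equivalently off the dilation. For the dilation route: set $\hat{x}=T^{-1}(x)$ and $\hat{y}=T^{-1}(y)$, so that by Lemma \ref{TIsOnto} these lie in $E_\lambda(\theta)$ and $E_\mu(\theta)$ respectively. Since $\theta$ is a unital $*$-endomorphism it is multiplicative, hence $\theta(\hat{x}\hat{y})=\theta(\hat{x})\theta(\hat{y})=\lambda\mu\,\hat{x}\hat{y}$, i.e.\ $\hat{x}\hat{y}\in E_{\lambda\mu}(\theta)$. By definition $x\circ y=T(\hat{x}\hat{y})=p\hat{x}\hat{y}p$, and applying the converse half of Lemma \ref{TIsOnto} with the (complex) eigenvalue $\lambda\mu$ gives $\tau(x\circ y)=\lambda\mu\,(x\circ y)$, that is, $x\circ y\in E_{\lambda\mu}(\tau)$.

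For the write-up one may equally well bypass the dilation: by Theorem \ref{formula}, $x\circ y=s-\lim_{n\to\infty}(\lambda\mu)^{-n}\tau^n(xy)$. Applying $\tau$, which is normal and hence strongly continuous on bounded sets, and using $\tau^{n+1}(xy)=\tau(\tau^n(xy))$ together with the fact that the limit is already known to exist, one obtains $\tau(x\circ y)=s-\lim_{n\to\infty}(\lambda\mu)^{-n}\tau^{n+1}(xy)=\lambda\mu\cdot s-\lim_{n\to\infty}(\lambda\mu)^{-(n+1)}\tau^{n+1}(xy)=\lambda\mu\,(x\circ y)$, the index shift being harmless precisely because convergence has been established. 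Either argument proves the first assertion. For the second: since $\lambda,\mu\in\mathbb{T}$ we have $\lambda\mu\in\mathbb{T}$, so $\lambda\mu$ is a candidate peripheral eigenvalue; if it is not in the point spectrum of $\tau$ then $E_{\lambda\mu}(\tau)=\{0\}$ by definition, and as $x\circ y\in E_{\lambda\mu}(\tau)$ we conclude $x\circ y=0$.

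I do not expect a genuine obstacle here: the whole content reduces to $\theta$ being multiplicative (equivalently, to the explicit limit formula) combined with Lemma \ref{TIsOnto}. The one point deserving a word of care is the index shift in the strong-operator limit, which must be justified by invoking the already-proved existence of the limit rather than by asserting that $\tau$ commutes with a merely formal limit.
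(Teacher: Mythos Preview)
Your proposal is correct and your primary (dilation) argument is exactly the paper's approach: lift $x,y$ to $\hat{x}\in E_\lambda(\theta)$, $\hat{y}\in E_\mu(\theta)$, use multiplicativity of $\theta$ to get $\hat{x}\hat{y}\in E_{\lambda\mu}(\theta)$, and compress. The alternative argument via the product formula is a nice bonus but not needed.
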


\begin{proof}
This follows as we have lifts $\hat{x}\in E_{\lambda }(\theta ),
\hat{y}\in E_{\mu}(\theta )$ and consequently $\hat {x}.\hat{y} \in
E_{\lambda .\mu }(\theta ).$ \end{proof}

We remark that instead of considering the whole group ${\mathbb T}$,
we may consider any subgroup $G$ of ${\mathbb T}$ and define
$$E_G(\tau )= \mbox{span}\{x\in E_{\lambda }(\tau ):
\lambda \in G\}.$$ Like in the main theorem, $T$ maps $E_{G}(\theta
)$ isometrically and bijectively to $E_{G}(\tau ).$ Following the
same procedure ${\mathcal P}_G(\tau ):=\overline{E_G(\tau )}$
becomes a unital $C^*$-algebra, which is in fact the
$C^*$-subalgebra of ${\mathcal P}(\tau )$ generated by $E_G(\tau ).$
The following case is of special interest.

\begin{defin}\label{k-cyclic}
For $k\in {\mathbb N}$,  $k$-cyclic Poisson boundary of UCP map
$\tau :{\mathcal A}\to {\mathcal A}$ is defined as the $C^*$-algebra
${\mathcal P}_{G_k}(\tau )$, where $G_k$ is the group $\{\omega ^j:
0\leq j\leq {k-1}\}$ with $\omega = e^{\frac{2\pi i}{k}}.$
\end{defin}

By considering the trivial group $\{1\}$, we get the usual
noncommutative Poisson boundary as a unital $C^*$-subalgebra of
${\mathcal P}(\tau ).$ Here are some additional properties of
peripheral eigenvectors as elements of  the peripheral Poisson boundary.

\begin{theorem}
Let $E_{\lambda }(\tau )$ for $\lambda \in {\mathbb T}$ be the
spaces as above.

(i) Define $I_{\lambda }(\tau )= \overline{\mbox{span}}\{x^*\circ y:
x, y\in E_{\lambda }(\tau )\}.$ Then $( I_{\lambda }(\tau ), \circ
)$ is a closed two sided ideal of the Poisson boundary
$C^*$-algebra, $(F(\tau), \circ )$.

(ii) $E_{\lambda }(\tau  )$ with $\langle x, y\rangle :=x^*\circ y$
(and natural left and right actions) is a two-sided Hilbert
$C^*$-module over the $C^*$-algebra $(F(\tau ), \circ )$, with inner
products taking values in the ideal $(I_{\lambda}(\tau ), \circ).$

(iii) The mapping $x\mapsto x^*$ from $E_{\lambda}(\tau )$ to
$E_{\bar{\lambda } }(\tau )$ is an anti-linear isomorphism.
\end{theorem}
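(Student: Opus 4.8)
The plan is to exploit two structural facts established earlier: first, that $T$ is a completely isometric bijection from $\mathcal{P}(\theta)$ to $\mathcal{P}(\tau)$ intertwining the new product $\circ$ on $\mathcal{P}(\tau)$ with the ordinary product on the $C^*$-algebra $\mathcal{P}(\theta)\subseteq\mathcal{B}$; second, that $T$ restricts to a bijection $E_\lambda(\theta)\to E_\lambda(\tau)$ for each $\lambda\in\mathbb{T}$, via the lifting map $x\mapsto\hat{x}$ of Lemma~\ref{TIsOnto}. Since $\theta$ is a $*$-endomorphism, the bookkeeping on eigenspaces is: if $\hat{x}\in E_\lambda(\theta)$ and $\hat{y}\in E_\mu(\theta)$, then $\hat{x}^*\in E_{\bar\lambda}(\theta)$ (because $\theta(\hat{x}^*)=\theta(\hat{x})^*=\bar\lambda\,\hat{x}^*$) and $\hat{x}\hat{y}\in E_{\lambda\mu}(\theta)$. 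In particular $\hat{x}^*\hat{y}\in E_1(\theta)=F(\theta)$, the lifted (ordinary) harmonic subalgebra, whose compression under $T$ is exactly $(F(\tau),\circ)$ — this is the Izumi/Arveson dilation picture of the Choi--Effros product, recovered here as the $\{1\}$-case of the main theorem.

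For part (i), I would first check that $I_\lambda(\tau)$ is a two-sided $\circ$-ideal of $(F(\tau),\circ)$ by transporting everything to $\mathcal{B}$: under $T^{-1}$, $I_\lambda(\tau)$ corresponds to $\overline{\mathrm{span}}\{\hat{x}^*\hat{y}:\hat{x},\hat{y}\in E_\lambda(\theta)\}$, which lies in $F(\theta)$. For the ideal property, given $f\in F(\theta)$ and $\hat{x},\hat{y}\in E_\lambda(\theta)$, we have $f\cdot(\hat{x}^*\hat{y})=(\hat{x}f^*)^*\hat{y}$, and $\hat{x}f^*\in E_\lambda(\theta)$ since $\theta(\hat{x}f^*)=\lambda\hat{x}f^*$ (using $f^*\in F(\theta)$); similarly on the right. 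Closedness under the $\circ$-operations follows because $T$ is isometric, so norm-closure is preserved. Since $T$ is a $*$-isomorphism $(\mathcal{P}(\theta),\cdot)\to(\mathcal{P}(\tau),\circ)$ and restricts to $(F(\theta),\cdot)\to(F(\tau),\circ)$, the image $(I_\lambda(\tau),\circ)$ is a closed two-sided $\circ$-ideal of $(F(\tau),\circ)$.

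For part (ii), the module structure is again read off from $\mathcal{B}$: $E_\lambda(\theta)$ with left/right multiplication by $F(\theta)$ and inner product $\langle\hat{x},\hat{y}\rangle=\hat{x}^*\hat{y}\in I_\lambda(\theta)$ is a two-sided Hilbert $C^*$-module over $F(\theta)$ — one verifies positivity, the Cauchy--Schwarz-type inequality, and completeness (the latter since $E_\lambda(\theta)$ is norm-closed, being the closed eigenspace of a bounded map). The compatibility of left and right actions, $(f\cdot\hat{x})\cdot g = f\cdot(\hat{x}\cdot g)$, is just associativity in $\mathcal{B}$. Transporting through the isometric $*$-isomorphism $T$ gives the same structure on $E_\lambda(\tau)$ with $\langle x,y\rangle=x^*\circ y$ and the induced $\circ$-actions of $F(\tau)$, with values in $I_\lambda(\tau)$. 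For part (iii), the map $x\mapsto x^*$ on $E_\lambda(\tau)$ corresponds under $T$ to $\hat{x}\mapsto\hat{x}^*$ from $E_\lambda(\theta)$ to $E_{\bar\lambda}(\theta)$ (using $p\hat{x}^*p=(p\hat{x}p)^* = x^*$, so $T$ intertwines the involution), which is visibly an anti-linear bijection with inverse given by the same formula; hence so is its $T$-transport.

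The only genuine subtlety — as opposed to routine transport of $C^*$-module axioms — is making sure that the ordinary involution and product on $\mathcal{B}$ really do correspond, under $T$ and the lifting formula, to the new product $\circ$ and the (unchanged) involution on $\mathcal{P}(\tau)$ at the level of the subspaces $E_\lambda$, rather than only on $F(\tau)$; this is exactly what Lemma~\ref{TIsOnto} together with Theorem~\ref{main} and Theorem~\ref{formula} supply, so no new estimate is needed. I would therefore organize the write-up as: (1) record the eigenspace arithmetic under $\theta$ and the fact that $T$ intertwines $*$ and sends $\cdot$ to $\circ$; (2) prove (iii) first as a warm-up; (3) prove (i); (4) deduce (ii) by invoking the standard fact that a $*$-closed, multiplicatively absorbing subspace $E$ of a $C^*$-algebra with $E^*E$ generating an ideal is automatically a Hilbert $C^*$-module over the relevant corners, then transport via $T$.
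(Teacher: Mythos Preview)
Your proposal is correct and follows essentially the same route as the paper: transport everything to the dilation side via the isometric $*$-isomorphism $T$, where the $*$-endomorphism property of $\theta$ makes the eigenspace arithmetic and the ideal/module verifications routine. The paper's proof is in fact just a two-sentence summary of exactly this strategy, so your write-up is a faithful (and considerably more detailed) expansion of it.
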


\begin{proof}
All the computations can be done using the minimal dilation $\theta
$ as $E_{\lambda }(\theta )$ and $E_{\lambda }(\tau )$ are
isomorphic and the product operation on ${\mathcal P}(\tau )$ is
borrowed from ${\mathcal P}(\theta ).$ Now the results (i)-(iii)
follow from the $*$-homomorphism property of $\theta .$ \end{proof}

\begin{corollary}\label{isometries} For $\lambda \in {\mathbb T}$ if there exist  $v_1, v_2\in E_{\lambda }(\tau )$,
such that $v_1^*\circ v_2=1$,  then
$$E_{\lambda}(\tau )= \{ x\circ v_2: x\in F(\tau )\} =\{x\circ v_1: x\in F(\tau )\}. $$
Similarly, if there exist $v_1, v_2\in E_\lambda(\tau )$ such that
$v_1\circ v_2^*=1$, then
$$E_{\lambda}(\tau )= \{v_2\circ x: x\in F(\tau )\}=\{v_1\circ x: x\in
F(\tau )\}.$$ In either case, ${\mbox{dim}}~(E_{\lambda }(\tau ))=
~\mbox{dim}~(F(\tau )).$
\end{corollary}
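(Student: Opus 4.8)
The plan is to work entirely inside the minimal dilation, where the peripheral Poisson boundary product is just the ordinary operator product, since Theorem \ref{main} tells us $T$ is an isometric $*$-isomorphism from $(\mathcal{P}(\theta),\cdot)$ onto $(\mathcal{P}(\tau),\circ)$ carrying $E_\lambda(\theta)$ onto $E_\lambda(\tau)$ and $E_1(\theta)$ onto $F(\tau)$. So I would first lift the hypothesis: if $v_1^*\circ v_2 = 1$ in $\mathcal{P}(\tau)$, set $\hat v_i = T^{-1}(v_i)\in E_\lambda(\theta)$, and then $\hat v_1^*\hat v_2 = 1_{\mathcal{B}}$ (using $T^{-1}(1)=1$, which holds because $p$ is the unit of $\mathcal{A}$ and $\widehat{1}=1_{\mathcal{B}}$ lifts it). Now the statement becomes a purely algebraic fact about the unital $C^*$-algebra $\mathcal{P}(\theta)$ together with the grading $\mathcal{P}(\theta)=\overline{\bigoplus_{\lambda\in\mathbb{T}}E_\lambda(\theta)}$ induced by the endomorphism $\theta$, namely $E_\lambda(\theta)E_\mu(\theta)\subseteq E_{\lambda\mu}(\theta)$ and $E_\lambda(\theta)^* = E_{\bar\lambda}(\theta)$.

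Next I would prove the two displayed descriptions of $E_\lambda(\tau)$. Translating through $T^{-1}$, the first claim is that $E_\lambda(\theta)=\{a\hat v_2 : a\in E_1(\theta)\}=\{a\hat v_1:a\in E_1(\theta)\}$ whenever $\hat v_1^*\hat v_2=1$. One inclusion is immediate from the grading: for $a\in E_1(\theta)$ we have $a\hat v_2\in E_{1\cdot\lambda}(\theta)=E_\lambda(\theta)$. For the reverse inclusion, given any $w\in E_\lambda(\theta)$, write $w = w(\hat v_1^*\hat v_2) = (w\hat v_1^*)\hat v_2$; here $w\hat v_1^*\in E_\lambda(\theta)E_{\bar\lambda}(\theta)\subseteq E_1(\theta)$, so $w$ has the required form with $a = w\hat v_1^*\in E_1(\theta)=F(\theta)$. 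The same argument with the roles adjusted, writing $w = (w\hat v_2^*)\hat v_1$ after checking $\hat v_1^*\hat v_2 = 1$ also gives $(\hat v_2^*\hat v_1)=1$ only if additionally one knows $\hat v_2^*\hat v_1 = 1$ — but in fact one only needs a right inverse, so I would instead note $w\hat v_1^* \in E_1(\theta)$ and then $w\hat v_1^*$ times which element gives $w$? Here care is needed: from $\hat v_1^*\hat v_2 = 1$ we only get $w = (w\hat v_1^*)\hat v_2$, so the description with $\hat v_2$ on the right is the primary one; the $\hat v_1$ version follows because $\hat v_1 = \hat v_1(\hat v_1^*\hat v_2)\hat\cdots$ is not automatic, so instead I would symmetrize by also using that $\hat v_2^*$ applied on the left: observe $\hat v_2^*\hat v_1 \in E_1(\theta)$ and, since $\hat v_1^*\hat v_2=1$ forces $\hat v_2$ to be a co-isometry and $\hat v_1$ to be... — this is the delicate bookkeeping point, and I would handle it by proving the statement for the single generator actually inverted and then invoking the $*$-operation together with part (iii) of the previous theorem to transfer to $\hat v_1$. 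The ``similarly'' clause with $v_1\circ v_2^*=1$ is the adjoint statement: apply the $*$-operation, which sends $E_\lambda(\theta)$ to $E_{\bar\lambda}(\theta)$ and reverses products, so it reduces to the case already done.

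For the final dimension assertion, the map $a\mapsto a\hat v_2$ from $F(\theta)=E_1(\theta)$ to $E_\lambda(\theta)$ is linear and, by the surjectivity just proved, onto; it is injective because if $a\hat v_2 = 0$ then $a = a(\hat v_1^*\hat v_2)\cdot$ — again using $\hat v_1^*\hat v_2=1$, multiply $a\hat v_2=0$ on the... wait, $\hat v_2$ is on the right, so I would multiply by $\hat v_2^*$ on the right only if $\hat v_2\hat v_2^*=1$; instead, from $a\hat v_2=0$ and $\hat v_1^*\hat v_2=1$ I cannot directly cancel. The clean route is: the map $b\mapsto b\hat v_1^*$ from $E_\lambda(\theta)$ to $E_1(\theta)$ and the map $a\mapsto a\hat v_2$ from $E_1(\theta)$ to $E_\lambda(\theta)$ are mutually inverse, since $(a\hat v_2)\hat v_1^* = a(\hat v_2\hat v_1^*)$ — and here I need $\hat v_2\hat v_1^*=1$, which is exactly what the \emph{second} hypothesis form $v_1\circ v_2^*=1$ would give, not the first. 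So the genuinely careful point, and the one I expect to be the main obstacle, is matching up which one-sided relation ($\hat v_1^*\hat v_2=1$ versus $\hat v_2\hat v_1^*=1$) yields a genuine bijection rather than merely a surjection; I anticipate the resolution is that the corollary is really asserting that such $v_1,v_2$ automatically give a two-sided inverse situation within the relevant corner, or that one should read ``dim'' as referring to the linear-space dimension where surjectivity of $a\mapsto a\hat v_2$ plus the reverse map $b\mapsto b\hat v_1^*$ being a left inverse already forces equality of dimensions. I would finish by spelling out that composition $b\mapsto b\hat v_1^*\mapsto (b\hat v_1^*)\hat v_2 = b(\hat v_1^*\hat v_2)=b$ shows $a\mapsto a\hat v_2$ is surjective with a right inverse, hence $\dim E_1(\theta)\ge \dim E_\lambda(\theta)$, and the reverse inequality comes from the co-isometry $\hat v_1$ giving an injection the other way, so the two dimensions agree; transporting back through the isometric isomorphism $T$ gives $\dim E_\lambda(\tau)=\dim F(\tau)$.
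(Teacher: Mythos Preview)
Your overall strategy---lift to the dilation via $T^{-1}$ and use that the product there is the ordinary one---is exactly what the paper does. The surjectivity argument $w=(w\hat v_1^*)\hat v_2$ is also the paper's. But you then tie yourself in knots over the $\hat v_1$ description and the dimension count, and the resolution you propose (``invoking the $*$-operation together with part (iii) of the previous theorem'') doesn't work: the $*$-map sends $E_\lambda$ to $E_{\bar\lambda}$, not back to $E_\lambda$, so it cannot by itself transfer the $\hat v_2$ description to a $\hat v_1$ description inside the \emph{same} eigenspace.

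The fix is a one-line observation you never make: taking adjoints of $\hat v_1^*\hat v_2=1$ gives $\hat v_2^*\hat v_1=1$. With this in hand, the $\hat v_1$ description is immediate by the same computation, $w=w(\hat v_2^*\hat v_1)=(w\hat v_2^*)\hat v_1$ with $w\hat v_2^*\in E_1(\theta)$. For the dimension equality, the paper avoids your attempt to make right multiplications mutually inverse (which, as you correctly notice, would require the unavailable relation $\hat v_2\hat v_1^*=1$). Instead it exhibits two injections in opposite directions: left multiplication $a\mapsto \hat v_2 a$ from $F(\theta)$ into $E_\lambda(\theta)$ (injective because $\hat v_1^*\hat v_2=1$ gives a left inverse $\hat v_1^*$), and right multiplication $b\mapsto b\hat v_1^*$ from $E_\lambda(\theta)$ into $F(\theta)$ (injective because $\hat v_1^*\hat v_2=1$ gives a right inverse $\hat v_2$). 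Two injections going opposite ways give equality of dimensions; your ``co-isometry $\hat v_1$'' remark is aiming at something like this but never lands on a concrete map, and $\hat v_1$ is not a co-isometry in any case.
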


\begin{proof}
We may replace $\tau $ by $\theta $ and consider the ordinary
product. Clearly if $x\in F(\theta )$ then $xv_2\in E_{\lambda
}(\theta )$ as $\theta (xv_2)= x. (\lambda v_2)$. Now if $y\in
E_{\lambda }(\theta )$, $y=(yv_1^*).v_2$ and $yv_1^* \in F(\theta
).$ This proves the first part.  Now if there exist $v_1, v_2\in
E_{\lambda }(\theta )$ such that $v_1^*v_2=1.$  If $x_1, \ldots , x_n$
are linearly independent in $F(\theta )$, then $v_2x_1, v_2x_2,
\ldots , v_2x_n$ are linearly independent in $E_{\lambda }(\theta )$
($\sum _ic_iv_2x_i=0$ implies $\sum _ic_ix_i= \sum
_ic_iv_1^*v_2x_i=0$) and conversely if $y_1, y_2, \ldots , y_n$ are
linearly independent in $E_{\lambda }(\theta )$, then $y_1v_1^*,
y_2v_1^*, \ldots , y_nv_1^*$ linearly independent in $F(\theta ).$
This shows ${\mbox{dim}}~(E_{\lambda }(\tau ))= ~\mbox{dim}~(F(\tau
)).$ The proofs of other parts are similar. \end{proof}

\begin{corollary}
If $F(\tau )$ is one dimensional and $E_{\lambda }(\tau )$ is
non-trivial, then $E_{\lambda }(\tau )$ is also one dimensional.
\end{corollary}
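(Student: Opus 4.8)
The plan is to deduce this corollary directly from \Cref{isometries}, or more precisely from its proof, by working inside the dilation $C^*$-algebra ${\mathcal P}(\theta)$ where the product is the ordinary operator product. Suppose $F(\tau)$ is one dimensional, so $F(\theta)$ is one dimensional, and hence $F(\theta) = {\mathbb C}\cdot 1$ since $1 \in F(\theta)$. Suppose $E_{\lambda}(\tau)$ is non-trivial; by \Cref{TIsOnto} this means $E_{\lambda}(\theta)$ is non-trivial, so pick a nonzero $v \in E_{\lambda}(\theta)$.

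First I would observe that $v^* v \in F(\theta)$: indeed $\theta(v^*v) = \theta(v)^*\theta(v) = \bar\lambda\lambda\, v^*v = v^*v$ since $|\lambda| = 1$ and $\theta$ is a $*$-homomorphism. Since $F(\theta) = {\mathbb C}\cdot 1$ and $v^*v$ is a nonzero positive element, we get $v^*v = c\cdot 1$ for some $c > 0$. Replacing $v$ by $c^{-1/2} v$, we may assume $v^*v = 1$, i.e.\ $v$ is an isometry in ${\mathcal P}(\theta)$ lying in $E_{\lambda}(\theta)$. Now I would argue exactly as in the first half of the proof of \Cref{isometries}: for any $y \in E_{\lambda}(\theta)$ we have $y = (yv^*)v$ where $yv^* \in F(\theta)$, since $\theta(yv^*) = \theta(y)\theta(v)^* = \lambda\bar\lambda\, yv^* = yv^*$. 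Hence $E_{\lambda}(\theta) = \{x v : x \in F(\theta)\} = {\mathbb C}\cdot v$, which is one dimensional. Transporting back through the isometric bijection $T$, $E_{\lambda}(\tau)$ is one dimensional, as claimed.

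The one subtle point worth checking — and the only place this differs from a mechanical application of \Cref{isometries} — is that \Cref{isometries} as stated assumes the \emph{existence} of elements $v_1, v_2 \in E_{\lambda}(\tau)$ with $v_1^* \circ v_2 = 1$, whereas here we must \emph{produce} such an element (a single $v$ with $v^* \circ v = 1$) from the mere non-triviality of $E_{\lambda}(\tau)$. The argument above does exactly this: the key input is that $v^*v$ is a harmonic element of $\theta$ (using $|\lambda| = 1$, which is essential — this is where being a peripheral eigenvalue enters), and then one-dimensionality of $F(\theta)$ forces $v^*v$ to be a scalar multiple of the identity, which can be normalised to $1$. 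So I do not expect any real obstacle; the dilation picture from \Cref{TIsOnto} and \Cref{main} has done all the heavy lifting, and what remains is the short normalisation argument together with the observation that $F(\theta) = {\mathbb C}\cdot 1$. If one prefers not to invoke the dilation at all, one could instead run the whole argument with $\circ$ in place of the ordinary product using \Cref{formula}, but the computation $v^* \circ v \in F(\tau)$ and the identity $y = (y \circ v^*) \circ v$ are cleaner in ${\mathcal P}(\theta)$.
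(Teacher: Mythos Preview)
Your proof is correct and follows essentially the same approach as the paper: both work in the dilation, observe that for $v_1, v_2 \in E_{\lambda}(\theta)$ the product $v_1^*v_2$ lies in $F(\theta)$ and hence is a scalar, then normalise a nonzero element to get an isometry and invoke \Cref{isometries}. The paper's proof is more terse, simply noting that the scalar observation combined with the dimension equality from \Cref{isometries} finishes the argument, whereas you spell out the normalisation and the identity $y = (yv^*)v$ explicitly; but the substance is identical.
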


\begin{proof}
If $v_1, v_2$ are in $E_{\lambda}(\theta )$, $v_1^*v_2\in F(\theta
)$ and hence it is a scalar.  Now the result is clear from previous
corollary. \end{proof}

The following answers the main question of \cite{FV} for general von
Neumann algebras.

\begin{theorem}\label{automorphism}
Let $\tau :{\mathcal A}\to {\mathcal A}$ be UCP map as above. Then
$x\mapsto \tau (x)$ is an automorphism of the peripheral Poisson
boundary ${\mathcal P} (\tau ).$ The peripheral Poisson boundary of
this automorphism is ${\mathcal P}(\tau ).$
\end{theorem}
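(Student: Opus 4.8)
The plan is to show that $\tau$ restricts to a unital $*$-automorphism of the $C^*$-algebra $(\mathcal{P}(\tau),\circ)$, after which the second assertion will be almost immediate. Three things have to be checked: that $\tau$ maps $\mathcal{P}(\tau)$ into itself, that it respects the product $\circ$, and that it is invertible on $\mathcal{P}(\tau)$. The first is clear, since $\tau(x)=\lambda x\in E_\lambda(\tau)$ for every $x\in E_\lambda(\tau)$ with $\lambda\in\mathbb{T}$, so each peripheral eigenspace is $\tau$-invariant, hence so are $E(\tau)$ and, by norm continuity of $\tau$, its closure $\mathcal{P}(\tau)$; moreover $\tau$ is a norm contraction there because it is UCP. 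Multiplicativity is equally quick with what is already available: if $x\in E_\lambda(\tau)$ and $y\in E_\mu(\tau)$, then using the corollary that $x\circ y\in E_{\lambda\mu}(\tau)$ together with bilinearity of $\circ$ one gets $\tau(x)\circ\tau(y)=(\lambda x)\circ(\mu y)=\lambda\mu\,(x\circ y)=\tau(x\circ y)$, and bilinearity and norm continuity of $\circ$ extend this to all of $\mathcal{P}(\tau)$. Since $\tau$ also preserves adjoints and the unit, it remains only to prove that $\tau|_{\mathcal{P}(\tau)}$ is bijective, and then it is automatically a unital $*$-automorphism.

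For invertibility I would exhibit the inverse directly. Define $\beta$ on $E(\tau)$ by $\beta(x)=\sum_{j=1}^{d}\bar\lambda_j x_j$ whenever $x=\sum_{j=1}^{d}x_j$ with $x_j\in E_{\lambda_j}(\tau)$ and $\lambda_1,\dots,\lambda_d\in\mathbb{T}$ distinct; this is well defined because eigenvectors for distinct eigenvalues are linearly independent. The essential input is Lemma \ref{dynamics}, used exactly as in the proof of Theorem \ref{main}: given such an $x$, choose $n_m\to\infty$ with $\lambda_j^{\,n_m}\to\bar\lambda_j$ for all $j$ (apply the lemma to the diagonal unitary with entries $\lambda_1,\dots,\lambda_d$, then shift the index by $1$), so that $\tau^{n_m}(x)=\sum_j\lambda_j^{\,n_m}x_j\to\beta(x)$ in norm; since $\|\tau^{n_m}(x)\|\le\|x\|$, this forces $\|\beta(x)\|\le\|x\|$. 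Hence $\beta$ extends to a norm contraction on $\mathcal{P}(\tau)$, and as $\beta\tau=\tau\beta=\mathrm{id}$ on the dense subspace $E(\tau)$, the same holds on all of $\mathcal{P}(\tau)$; so $\tau|_{\mathcal{P}(\tau)}$ is bijective and therefore a unital $*$-automorphism of $(\mathcal{P}(\tau),\circ)$. (Equivalently, one can transport the obvious fact that $\theta$ restricts to a $*$-automorphism of $\mathcal{P}(\theta)$ through the isomorphism $T$ of Theorem \ref{main}; the invertibility still rests on Lemma \ref{dynamics} in the same way.)

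For the last assertion, put $\alpha=\tau|_{\mathcal{P}(\tau)}$. For each $\lambda\in\mathbb{T}$ we have $\{z\in\mathcal{P}(\tau):\alpha(z)=\lambda z\}=\{z\in\mathcal{P}(\tau):\tau(z)=\lambda z\}=E_\lambda(\tau)$, so the span of the peripheral eigenvectors of $\alpha$ is precisely $E(\tau)$ and its closure is $\mathcal{P}(\tau)$. Since $\alpha$ is already a $*$-automorphism, there is nothing to dilate and the peripheral Poisson boundary construction returns the original product $\circ$; hence the peripheral Poisson boundary of $\alpha$ is $(\mathcal{P}(\tau),\circ)$ itself. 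The only genuinely nontrivial step in all of this is the invertibility of $\tau|_{\mathcal{P}(\tau)}$: contractivity is automatic, but boundedness of the inverse is not, and it is exactly here that Lemma \ref{dynamics} does the work, just as in Theorem \ref{main}; everything else is formal.
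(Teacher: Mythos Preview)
Your proof is correct and takes essentially the same approach as the paper: both arguments reduce the only nontrivial point, bijectivity of the restriction, to Lemma~\ref{dynamics}. The only cosmetic difference is that the paper works with $\theta$ on $\mathcal{P}(\theta)$ (where multiplicativity is automatic because $\theta$ is a $*$-endomorphism) and then transports via $T$, while you argue directly on $(\mathcal{P}(\tau),\circ)$, verify multiplicativity via the corollary $x\circ y\in E_{\lambda\mu}(\tau)$, and exhibit the inverse $\beta$ explicitly; these are minor repackagings of the same idea.
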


\begin{proof}
Consider $x\mapsto \theta (x)$ in the peripheral Poisson boundary of
$\theta .$ Since $\theta $ maps $E(\theta )$ to itself, $\theta $
restricted to the peripheral Poisson boundary is a $*$-endomorphism.
Since $\theta (x)=\lambda .x$ for $x\in E_{\lambda}(\theta )$ it is
surjective.

Now we prove that $\theta $ is isometric on $E(\theta )$, and
consequently it is isometric on ${\mathcal P}(\theta ).$ That will
prove in particular that $\theta $ is injective.  As $\theta $ is a
$*$-endomorphism $\|x\|\geq \|\theta (x) \|\geq \|\theta ^n(x)\|$ for
all $x,n$. Consider $x=\sum _{j=1}^dx_j,$ with $x_j\in E_{\lambda
_j}(\theta )$ for distinct $\lambda _1, \ldots , \lambda _d$ in
${\mathbb T}$ and some $d\geq 1.$ We have  $\theta ^n(x)= \sum
_{j=1}^d\lambda _j^nx_j$ for all $n\geq 1.$ For $\epsilon >0$, by
Lemma \ref{dynamics}, there exists $n$ such that
$$|\lambda _j^n-1|< \frac{\epsilon }{2\sum _{k=1}^d\|x_k\|}, ~~1\leq
j\leq d.$$ Then $\|\theta ^n(x)-x\|= \|\sum _{j=1}^d(\lambda
_j^n-1)x_j\|\leq \frac{\epsilon}{2\sum _{k=1}^d\|x_k\|}.\sum _{j=1}^d\|x_j\|<
\epsilon .$ As this is true for all $\epsilon >0$, we get $\|\theta
(x)\|=\|x\|.$ This proves the result for $\theta .$  Now consider
the compression map $T$ on $E(\theta )$ and we are done. The proof
of the second part is straightforward. \end{proof}

\section{Examples}\label{Examples}

In general it is difficult to explicitly determine Poisson
boundaries of UCP maps. For instance, these von Neumann algebras can
be factors of different types.  There are large number of papers
devoted  to these computations. In this Section we present some examples, deliberately chosen to be simple, to show that the
peripheral Poisson boundary can be very distinct from the Poisson boundary. To begin with  here is a simple finite dimensional example.
\begin{example}
Fix some $d\geq 2$ and let $U$ be a unitary in $M_d({\mathbb C}).$
Let $\tau :M_d({\mathbb C})\to M_d({\mathbb C})$ be the automorphism
$$\tau (X) =U^*XU, ~~X\in M_d({\mathbb C}).$$
The minimal dilation of $\tau $  is $\tau $ itself.  For $\mu \in
{\mathbb C}$, let ${\mathcal H}_{\mu }:=\{h\in {\mathbb C}^d: Uh =
\mu h\}$ be the corresponding eigenspace. Then the fixed point space
of $\tau $ is given by
$$F(\tau )= \oplus _{\mu \in \sigma (U)}B({\mathcal H}_{\mu })$$
and for any $\lambda \in {\mathbb T}$,
$$E_{\lambda }(\tau )=  \{X\in M_d({\mathbb C}): X({\mathcal H}_{\mu })\subseteq {\mathcal H}_{\bar{\lambda } .\mu }, ~~\forall
\mu \}.$$ The Poisson boundary of $\tau $ is the commutant of $U$
and the peripheral Poisson boundary of $\tau $ is whole of
$M_d({\mathbb C}).$ These results can be seen easily by taking $U$
as a diagonal matrix and observing that all matrix units of the
standard basis become eigenvectors of $\tau .$
\end{example}

Here is a class of examples which are not $*$-homomorphisms.

\begin{example} Fix some $d\geq 2$ and let $U,V$ be unitaries in
$M_d({\mathbb C})$ such that $VU= e^{\frac{2\pi i}{d}}UV.$ (Such
unitaries are known as Weyl unitaries.) Take ${\mathcal H}={\mathbb
C}^d\otimes \cdots \otimes {\mathbb C}^d$ ($n$-times, for some $n\in
{\mathbb N}$). Let $p_j, 1\leq j\leq n$ be positive scalars such
that $\sum _jp_j=1. $ Let $U_j=I^{\otimes ^{j-1}}\otimes U\otimes
I^{\otimes ^{n-j}}.$ Define $\tau :B({\mathcal H})\to B({\mathcal
H})$ by
$$\tau (X) = \sum _jp_jU_j^*XU_j.$$
Then we can see that $\tau (V^{\otimes ^n})=e^{\frac{2\pi
i}{d}}V^{\otimes ^n}.$ So the peripheral Poisson boundary  is
distinct from the Poisson boundary.
\end{example}

Perhaps the most well-known example of a noncommutative Poisson
boundary comes from the theory of Toeplitz operators. Understanding
the peripheral Poisson boundary  of this example is very
instructive. Let us recall the setup.

Let ${\mathcal K}=L^2({\mathbb T})$, with the inner product coming
from the normalized Haar measure on the unit circle. It has the
standard orthonormal basis $\{e_n:n\in {\mathbb Z}\}$ where
$$e_n(e^{2\pi it})= e^{2\pi int}, ~~0\leq t<2\pi .$$
Now consider the Hardy space ${\mathcal H}:=H^2({\mathbb T})\subset
L^2({\mathbb T}))$ with its standard orthonormal basis $\{e_n: n\in
{\mathbb Z}_+\}$. Let ${\mathcal A}$ be the von Neumann algebra
$B({\mathcal H}).$  For $f\in L^{\infty}({\mathbb T})$ we have the
multiplication operator on ${\mathcal K}$ defined by
$$M_fh(z)=f(z)h(z), ~~z\in {\mathbb T}.$$
Then the Toeplitz operator with symbol $f\in L^{\infty }({\mathbb
T})$ is the operator
$$T_f= PM_f|_{{\mathcal H}}.$$
where $P$ is the projection of ${\mathcal K}$ onto ${\mathcal H}.$
Let $S$ denote the Toeplitz operator with symbol ``$z$'', which is
of course the unilateral shift on ${\mathcal H}$ mapping $e_n$ to
$e_{n+1}$ for $n\in {\mathbb Z}_+.$ Consider the UCP map $\tau
:B({\mathcal H})\to B({\mathcal H})$ defined by
$$\tau (X)=S^*XS, ~~X\in B({\mathcal H}).$$
By a well known result of Brown and Halmos the space $F(\tau )$ of
fixed points of $\tau $ is precisely the space of Toeplitz operators
$\{T_f: f\in L^{\infty }({\mathbb T}\}.$ Furthermore, the Poisson
boundary structure on this space is given by
$$T_f\circ T_g = T_{fg}, ~~f,g\in L^{\infty}({\mathbb T}).$$
In particular, it is a commutative von Neumann algebra. Now to
describe the peripheral Poisson boundary, let us look at the minimal
dilation.

It is not hard to see that the minimal dilation of $\tau $ is given
by $({\mathcal K}, B({\mathcal K}), \theta )$ where $\theta $ is the
automorphism of $B({\mathcal K})$  given by
$$\theta (Z)= U^*ZU, ~~Z\in B({\mathcal K}),$$
with $U=M_z.$ The space of fixed points of $\theta $ is given by
$\{M_f: f\in L^{\infty }({\mathbb T})\}.$ For $\lambda \in {\mathbb
T}$, consider $V_{\lambda }\in B({\mathcal K})$ defined by
$$V_{\lambda }f(z)= f(\lambda z), ~~f\in {\mathcal K}.$$
Clearly $V_{\lambda }$ is a unitary and $V_{\lambda }e_n = \lambda
^ne_n, ~~\forall n\in {\mathbb Z}.$ It is easy to see that $\theta
(V_{\lambda })=\lambda V_{\lambda },$ that is, $V_{\lambda }\in
E_{\lambda }(\theta ).$ Then by Corollary \ref{isometries},
$$E_{\lambda }(\theta )= \{ M_fV_{\lambda }: f\in
L^{\infty}({\mathbb T})\} = \{ V_{\lambda }M_f: f\in L^{\infty
}({\mathbb T})\}.$$ Then by the main theorem,
$$E_{\lambda }(\tau ) =\{ T_{\lambda , f}: f\in L^{\infty}({\mathbb
T})\},$$ where $T_{\lambda , f }= P_{\mathcal
H}M_fV_{\lambda}|_{\mathcal H}.$ The operators $T_{  \lambda , f}$
can be called $\lambda $-Toeplitz operators, with $\lambda =1$
corresponding to usual Toeplitz operators.  Such operators have been
studied before (See \cite{Ho}). Since $M_fV_{\lambda }.M_gV_{\mu }=
M_{f(\cdot)g(\lambda .\cdot)}V_{\lambda .\mu}$, the product in the
peripheral Poisson boundary is given by
$$T_{\lambda , f}\circ T_{ \mu , g}= T_{\lambda \mu , f(\cdot )g(\lambda .\cdot ) }.$$

Making use of the notion of covariant representations we may express
the observations made above as follows.

\begin{theorem}
Let $\tau:  B(L^2({\mathbb T}))\to B(L^2({\mathbb T}))$ be the map
$X\mapsto S^*XS$ as above. Consider the $C^*$-dynamical system $(
{\mathbb T}, L^{\infty }({\mathbb T}),  \alpha )$, where  ${\mathbb T} \stackrel{\alpha}{\curvearrowright} L^{\infty }({\mathbb T})$ is the natural action
given by $\alpha (\lambda )f(z)= f(\lambda ^{-1} .z)$ for $f\in
L^{\infty }({\mathbb T}), \lambda \in {\mathbb T}.$
 Then the peripheral Poisson boundary of
$\tau $ is equal to the $C^*$-algebra generated by the images of
covariant representation $(\pi , u)$ on  $L^{2}({\mathbb T})$ given
by
$$\pi (f)= M_f,  ~~u (\lambda )= V_{\bar{\lambda }  },~~f\in
L^{\infty}({\mathbb T}), \lambda \in {\mathbb T}.$$
\end{theorem}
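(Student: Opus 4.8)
The plan is to identify the peripheral Poisson boundary $\mathcal{P}(\tau)$ with the $C^*$-algebra $C^*(\pi, u)$ generated inside $B(L^2(\mathbb{T}))$ by the operators $\{M_f : f \in L^\infty(\mathbb{T})\}$ and $\{V_\lambda : \lambda \in \mathbb{T}\}$, using the already-established description of the spaces $E_\lambda(\theta)$ and the isometry $T$ from Theorem \ref{main}. First I would verify that the pair $(\pi, u)$ as defined is genuinely a covariant representation of the $C^*$-dynamical system $(\mathbb{T}, L^\infty(\mathbb{T}), \alpha)$: one checks directly that $\pi$ is a unital $*$-homomorphism, that $u$ is a strongly continuous unitary representation of $\mathbb{T}$ (indeed $V_\lambda V_\mu = V_{\lambda\mu}$ and $V_\lambda^* = V_{\bar\lambda} = V_{\lambda^{-1}}$), and that the covariance relation $u(\lambda)\pi(f)u(\lambda)^* = \pi(\alpha(\lambda)f)$ holds; this is the computation $V_{\bar\lambda} M_f V_{\bar\lambda}^* = M_{f(\lambda^{-1}\cdot)}$, which is the same algebraic identity already used in the text to derive the product formula $M_f V_\lambda \cdot M_g V_\mu = M_{f(\cdot)g(\lambda\cdot)} V_{\lambda\mu}$.

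Next I would show the two algebras coincide as subspaces of $B(L^2(\mathbb{T}))$. For the containment $\mathcal{P}(\theta) \subseteq C^*(\pi, u)$: by Corollary \ref{isometries} (as invoked in the Toeplitz discussion) each $E_\lambda(\theta) = \{M_f V_\lambda : f \in L^\infty(\mathbb{T})\}$, and every such operator lies in $C^*(\pi, u) = C^*(\{M_f\} \cup \{V_\lambda\})$ since $M_f V_\lambda = \pi(f) u(\bar\lambda)$. Hence $E(\theta) \subseteq C^*(\pi,u)$, and taking norm closure gives $\mathcal{P}(\theta) \subseteq C^*(\pi, u)$. For the reverse containment, it suffices to see that each generator $M_f$ and each $V_\lambda$ lies in $\mathcal{P}(\theta)$: the fixed-point space $F(\theta) = E_1(\theta) = \{M_f : f \in L^\infty(\mathbb{T})\}$ is contained in $E(\theta) \subseteq \mathcal{P}(\theta)$, and $V_\lambda \in E_\lambda(\theta) \subseteq \mathcal{P}(\theta)$; since $\mathcal{P}(\theta)$ is a unital $C^*$-algebra (as noted before Theorem \ref{main}, because $\theta$ is an endomorphism), it contains the $C^*$-algebra these generate, so $C^*(\pi, u) \subseteq \mathcal{P}(\theta)$. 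Thus $\mathcal{P}(\theta) = C^*(\pi, u)$ as concrete $C^*$-algebras on $L^2(\mathbb{T})$.

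Finally I would transport this across the compression $T$. By Theorem \ref{main}, $T : \mathcal{P}(\theta) \to \mathcal{P}(\tau)$ is an isometric bijection, and with the new product $\circ$ on $\mathcal{P}(\tau)$ it is a $*$-isomorphism onto $(\mathcal{P}(\tau), \circ)$. Since $\mathcal{P}(\theta) = C^*(\pi, u)$ and $C^*$-algebras admit at most one $C^*$-norm, the $C^*$-algebra $(\mathcal{P}(\tau), \circ)$ is $*$-isomorphic to the $C^*$-algebra generated by the covariant representation $(\pi, u)$; concretely, $T$ sends $M_f V_\lambda$ to the $\lambda$-Toeplitz operator $T_{\lambda, f} = P_{\mathcal{H}} M_f V_\lambda|_{\mathcal{H}}$, and the product identity above is exactly the transported product. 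The main point requiring care — though it is light here — is the minimal dilation claim that $(\mathcal{K}, B(\mathcal{K}), \theta)$ with $\theta(Z) = U^*ZU$, $U = M_z$, is the minimal dilation of $X \mapsto S^*XS$; this is asserted in the preceding discussion and can be checked against properties (i)–(vii) of Theorem \ref{Dilation} directly (the projection $p = P_{\mathcal{H}}$ satisfies $\theta^n(p) = M_{z^n} P_{\mathcal{H}} M_{\bar z^n} = $ projection onto $\overline{\mathrm{span}}\{e_k : k \geq -n\}$, which increases strongly to $I$, giving minimality), and once that identification is in hand everything else is the bookkeeping described above.
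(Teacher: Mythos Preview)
Your proposal is correct and follows essentially the same approach as the paper: the paper's written proof only verifies the covariance relation $u(\lambda)\pi(f)u(\lambda)^*=\pi(\alpha(\lambda)f)$ via the computation $V_{\bar\lambda}M_fV_{\bar\lambda}^*=M_{f(\lambda^{-1}\cdot)}$, relying on the preceding discussion (where $E_\lambda(\theta)=\{M_fV_\lambda\}$ was already determined via Corollary~\ref{isometries}) for the identification $\mathcal{P}(\theta)=C^*(\pi,u)$. You have simply made explicit the two containments and the transport via $T$ that the paper leaves implicit.
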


\begin{proof}
It is easy to see that these representations are covariant as $$
(V_{\bar{\lambda }}M_fV_{\bar{\lambda } }^*) (h)(z)=
(M_fV_{\bar{\lambda }}^*)h(\bar{\lambda }z) = f(\bar{\lambda }
z)(V_{\bar{\lambda }})^*(h)(\bar{\lambda }z) = f(\bar{\lambda }
z)h(z).$$
Hence $V_{\lambda }M_fV_{\lambda }^*= M_{\alpha({\lambda})(f)}$.
\end{proof}

In the theory of Toeplitz operators the observation that
$$\|T_f\|= \|M_f\|= \|f\|_{\infty },$$
is considered as a very significant and useful result. From Theorem
\ref{main},  we have an extension of this result to have
$$\|PXP\|= \|X\|$$
whenever $X$ is in $\overline{\mbox{span}}\{M_fV_{\lambda }: f\in
L^{\infty }({\mathbb T}), \lambda \in {\mathbb T}\}.$ In particular,
\begin{equation}\label{norm}
	\| \sum _{j=1}^n c_jT_{\lambda _j, f_j}\|= \|\sum _{j=1}^n
	c_jM_{f_j}V_{\lambda _j}\|\end{equation} for $c_j\in {\mathbb C},
f_j\in L^{\infty}({\mathbb T}), \lambda _j\in {\mathbb T}, 1\leq
j\leq n, n\in {\mathbb N}.$

Surprisingly we are unable to take closure in the strong operator
topology as the map $X\mapsto PXP$ won't remain isometric if we do
that. This can be seen as follows. In the  example above, $F(\theta
)=\{M_f:f\in L^\infty(\mathbb{T})\}$ is a von Neumann algebra whose
commutant is itself. Therefore the commutant of  $F(\theta )\bigcup
\{V_{\lambda } \}$ is contained in $L^{\infty }({\mathbb T})$ for
any $\lambda $. Then it is easy to see that if $\lambda = e^{2\pi
i\gamma}$ with $\gamma $ irrational, then this commutant consists of
just scalars. In particular, the von Neumann algebra generated by
$E(\theta )$ is whole of $B({\mathcal K}).$ Obviously the
compression map $X\mapsto PXP$  is not isometric on $B({\mathcal
K}).$ In other words, we are not able to take the peripheral Poisson
boundary as a von Neumann algebra.

The previous example  shows another feature of dilation. If
$|\lambda|<1$ and $X\in E_{\lambda }(\tau )$, it is possible that
there is no $\hat{X}\in E_{\lambda }(\theta )$ such that
$P\hat{X}P=X.$ This follows trivially for this example as an
automorphism being isometric can't have points outside of the unit
circle in its point spectrum.

Now we consider the non-commutative extension of random walks on
discrete groups studied by M. Izumi \cite{Izumi2004}.

\begin{example} Let $G$ be a countable discrete group and let $\mu $
be a probability measure on $G$. The Hilbert space under
consideration is $l^2(G)$ and let $L$, $R $ be respectively the left
and right regular representations of $G$, defined by $L(g)(\delta
_h)= \delta _{gh}$ and $R (g) (\delta _h)= \delta _{hg^{-1}}$ on
standard basis vectors and extended as unitaries. Let $\tau
:B(l^2(G))\to B(l^2(G))$ be defined by
$$\tau (X) = \sum _{g\in G}\mu (g)R(g)XR(g)^*, X\in B(l^2(G)).$$

 Izumi \cite{Izumi2004} has identified the fixed point space of $\tau $
 as a crossed product of classical harmonic elements with $G$.
Let $S$ be the support of $\mu$, $S:=\{g\in G:\mu (g)>0\}.$ Let
$\hat{G}_S$ be the group of all characters on $G$ which are constant
on $S$.
 Now
 consider any $\lambda \in {\mathbb T}$. Suppose there is a character
  $\chi \in \hat{G}_S$ such that \begin{equation}\label{character} \chi (g) =
  \lambda ,~~\forall g\in S.
\end{equation}   Let  $V_{\chi }$ be the unitary defined on $l^2(G)$ by setting $V_{\chi
 }\delta _h= \chi (h)\delta _h$ on basis vectors.
Then $R(g)V_{\chi }R(g)^*\delta _h= R(g)V_{\chi }\delta
_{hg}=R(g)\chi (hg)\delta _{hg}=\chi(hg)\delta _h=\lambda
V_{\chi}\delta _h$ for every $g\in S $. Therefore $V_{\chi }$ is in
$E_{\lambda }(\tau ).$ Then by Lemma \ref{isometries}, we have
$$E_{\lambda }(\tau )= \{ A \circ V_{\chi}: A\in  E_1(\tau )\}= \{V_{\chi
} \circ A: A\in E_1(\tau )\}.$$ We believe that if there is no character
$\chi $ satisfying \ref{character}, then $E_{\lambda}(\tau )=\{0\}$.
\end{example}

\section{Discrete  quantum dynamics}

In quantum theory of open systems dynamics is described through
completely positive maps. In discrete time it just means that we are
studying powers $\tau , \tau ^2, \ldots $ of a single completely
positive maps. Here we study how peripheral eigenspaces move when we
consider powers of a UCP map. Here our focus will be on quantum
channels on the algebra $M_d({\mathbb C})$ of $d\times d$ matrices
and more generally CP maps which preserve a faithful normal state.
However, initial few results are valid for CP maps on general
$C^*$-algebras.

We find the following result from linear algebra very useful. It is
valid even for  infinite dimensional vector spaces. Recall that if
$V_1, \ldots , V_n$ are subspaces of a vector space $V$, the
subspace $W:= \mbox{span}(\bigcup _{j=1}^nV_j)$ is a vector space
direct sum if every vector $w$ in $W$ decomposes uniquely as
$w=v_1+\cdots +v_n$, with $v_j\in V_j.$ We will denote this by
writing $W=\bigvee _{j=1}^nV_j.$

\begin{lemma}\label{generalcase}
Let $V$ be a complex vector space and let $\tau :V\to V$ be a linear
map. Let $p$ be a polynomial of degree $n$ with distinct roots
$\lambda _1, \ldots , \lambda _n$ for some $n\geq 2.$ Let
$E_{\lambda}(\tau )$ be the kernel of $\tau -\lambda I$. Then
$${\mbox{ker}}(p(\tau))= \bigvee _{j=1}^n E_{\lambda _j}(\tau ).$$
\end{lemma}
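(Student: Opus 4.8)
The plan is to reduce everything to a single Lagrange-interpolation identity and then push it through the linear map $\tau$. Since multiplying $p$ by a non-zero scalar changes neither $\ker(p(\tau))$ nor the right-hand side, I may assume $p(z)=\prod_{j=1}^n(z-\lambda_j)$. For each $j$ put $q_j(z)=\prod_{k\neq j}(z-\lambda_k)$ and $c_j=\bigl(\prod_{k\neq j}(\lambda_j-\lambda_k)\bigr)^{-1}$, which is well defined because the $\lambda_k$ are distinct. The interpolation formula for the constant function $1$ gives the polynomial identity $\sum_{j=1}^n c_j q_j(z)=1$, as both sides are polynomials of degree at most $n-1$ agreeing at the $n$ distinct points $\lambda_1,\dots,\lambda_n$. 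Substituting the operator $\tau$ (all polynomials in $\tau$ commute) yields a resolution of the identity $\sum_{j=1}^n c_j q_j(\tau)=I$ on $V$.

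Next I would prove the two set-theoretic inclusions. For $\bigvee_{j=1}^n E_{\lambda_j}(\tau)\subseteq\ker(p(\tau))$: if $v\in E_{\lambda_j}(\tau)$ then, since the factors $\tau-\lambda_k I$ commute, $p(\tau)v=q_j(\tau)(\tau-\lambda_j I)v=0$, so each $E_{\lambda_j}(\tau)$ and hence their span lies in $\ker(p(\tau))$. For the reverse inclusion, take $v\in\ker(p(\tau))$ and write $v=\sum_{j=1}^n c_j q_j(\tau)v$ using the identity above. Because $(z-\lambda_j)q_j(z)=p(z)$, we get $(\tau-\lambda_j I)\bigl(c_j q_j(\tau)v\bigr)=c_j p(\tau)v=0$, so $c_j q_j(\tau)v\in E_{\lambda_j}(\tau)$; thus $v\in\sum_j E_{\lambda_j}(\tau)$.

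Finally I would check that the sum is a vector-space direct sum in the sense defined before the lemma, i.e. that decompositions are unique. Suppose $v_1+\cdots+v_n=0$ with $v_j\in E_{\lambda_j}(\tau)$. Since each $v_j$ is an eigenvector, $q_i(\tau)v_j=q_i(\lambda_j)v_j$, and $q_i(\lambda_j)=0$ whenever $j\neq i$ while $q_i(\lambda_i)=c_i^{-1}\neq0$. Applying $c_i q_i(\tau)$ to the relation therefore gives $v_i=0$ for every $i$, which is exactly uniqueness. This establishes $\ker(p(\tau))=\bigvee_{j=1}^n E_{\lambda_j}(\tau)$; the same computation incidentally shows that $c_i q_i(\tau)$ restricts to the projection of $\ker(p(\tau))$ onto $E_{\lambda_i}(\tau)$.

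There is no serious obstacle here; the only point needing care is that $V$ is allowed to be infinite-dimensional with no diagonalizability, finiteness, or minimal-polynomial hypothesis, so one cannot invoke the spectral theorem or a Vandermonde-rank argument over a finite basis. The purely algebraic partial-fraction identity is precisely what makes the argument dimension-free, and it conveniently delivers both the decomposition and the projections that witness directness.
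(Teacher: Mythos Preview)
Your proof is correct and follows essentially the same strategy as the paper: both produce a polynomial partition of unity built from the factors $q_j(z)=\prod_{k\neq j}(z-\lambda_k)$ and use it to decompose any $v\in\ker(p(\tau))$ into eigenvectors, with the same operators $c_jq_j(\tau)$ witnessing uniqueness. Your Lagrange identity $\sum_j c_jq_j=1$ is in fact slightly cleaner than the paper's variant, which first shifts the eigenvalues to be nonzero and then uses the identity $p(x)=p(0)\bigl[1-\sum_i \frac{x\,q_i(x)}{\lambda_i\,q_i(\lambda_i)}\bigr]$; your version avoids that preliminary reduction entirely.
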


\begin{proof} Without loss of generality we may assume that $p$ is
monic, so that $p(x)= (x -\lambda _1)\cdots (x-\lambda _n).$ For
computational convenience we wish to assume $\lambda _i\neq 0$ for
every $i$. If not, choose $\mu \in {\mathbb C}$ such that
$\tilde{\lambda }_i:=\lambda _i+\mu \neq 0$ for every $i$. Take
$\tilde{p}(x)= \Pi _i(x-\tilde{\lambda }_i)$ and $\tilde{\tau }=\tau
+\mu I$. Then $\tilde{p}(\tilde {\tau})= p(\tau )$ and
$E_{\tilde{\lambda }_i}(\tilde{\tau })=E_{\lambda _i }(\tau )$ and
so proving the result for $\tilde{p}(\tilde {\tau})$ will suffice.
Therefore we may assume $\lambda _i\neq 0$ for every $i$. Now taking
$d= \Pi_{k=1}^n(-\lambda _k)$, we have the algebraic identity
$$p(x) = d[1- \sum _{i=1}^n\frac{x\Pi _{j\neq i}(x-\lambda _j)}{\lambda _i\Pi _{j\neq i}(\lambda _i-\lambda _j)}],$$
which can be verified easily by evaluating both sides at $0, \lambda
_1, \cdots , \lambda _n.$  Now if $y\in \mbox{ker}(p(\tau ))$,
$p(\tau )y=0$ implies $d[1- \sum _{i=1}^n\frac{\tau \Pi _{j\neq
i}(\tau -\lambda _j)}{\lambda _i\Pi _{j\neq i}(\lambda _i-\lambda
_j)}]y=0.$ Hence $y= \sum _{i=1}^n\frac{\tau \Pi _{j\neq i}(\tau
-\lambda _j)}{\lambda _i\Pi _{j\neq i}(\lambda _i-\lambda _j)}y
=\sum _{i=1}^n y_i,$ where $y_i= \frac{\tau  \Pi _{j\neq i}(\tau
-\lambda _j)}{\lambda _i\Pi (\lambda _i-\lambda _j)}y.$ Note that
$(\tau -\lambda _i).\Pi _{j\neq i}(\tau -\lambda _j)= p(\tau ).$
Hence $(\tau -\lambda _i)y_i=0$. So we get  $y=\sum _iy_i$ with
$y_i\in E_{\lambda _i}(\tau ).$ The converse part namely if $y=\sum
_iy_i$ with $y_i\in E_{\lambda _i}(\tau )$, then $y\in \mbox{ker}
(p(\tau ))$ is obvious, and also in such a case $y_i$ can be
recovered as $\frac{\Pi _{j\neq i}(\tau -\lambda _j)y}{\Pi _{j\neq
i}(\lambda _i-\lambda _j)}$, which proves uniqueness of the
decomposition.
\end{proof}

In the following special case of our current interest on peripheral
spectrum we can do a kind of discrete Fourier inversion.

\begin{lemma}\label{specialcase}
Let $V$ be a complex vector space and let $\tau :V\to V$ be a linear
map. Fix $k\geq 2$ and let $\omega = e^{\frac{2\pi i}{k}}.$ Then
$$F (\tau ^k):=E_1(\tau ^k)= \bigvee _{j=0}^{k-1}E_{\omega ^j}(\tau ).$$ Furthermore,
$x\in F ({\tau ^k})$ decomposes uniquely  as $x= \sum
_{j=0}^{k-1}x_j$ with $x_j\in E_{\omega ^j}(\tau )$ where
\begin{equation}
x_j= \frac{1}{k}\sum _{l=0}^{k-1}\omega ^{-lj}\tau ^l(x).
\end{equation}
\end{lemma}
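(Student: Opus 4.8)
The plan is to derive Lemma \ref{specialcase} as a direct application of Lemma \ref{generalcase} together with an explicit computation of the spectral projections. First I would apply Lemma \ref{generalcase} to the polynomial $p(x) = x^k - 1$, whose roots are exactly the distinct $k$-th roots of unity $1, \omega, \omega^2, \ldots, \omega^{k-1}$, and whose associated operator is $p(\tau) = \tau^k - I$. Since $\ker(\tau^k - I) = E_1(\tau^k) = F(\tau^k)$, Lemma \ref{generalcase} immediately yields the direct sum decomposition $F(\tau^k) = \bigvee_{j=0}^{k-1} E_{\omega^j}(\tau)$, establishing the first assertion.

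For the second assertion, I need to identify the component $x_j \in E_{\omega^j}(\tau)$ of a given $x \in F(\tau^k)$. The uniqueness statement in Lemma \ref{generalcase} already tells us $x_j$ is recovered as $\frac{\Pi_{i \neq j}(\tau - \omega^i)x}{\Pi_{i \neq j}(\omega^j - \omega^i)}$, so in principle one could just simplify this expression. However, the cleaner route is to verify directly that the proposed formula $x_j = \frac{1}{k}\sum_{l=0}^{k-1} \omega^{-lj} \tau^l(x)$ does the job. I would check two things: first, that $\sum_{j=0}^{k-1} x_j = x$, which follows from the discrete Fourier orthogonality relation $\frac{1}{k}\sum_{j=0}^{k-1} \omega^{-lj} = \delta_{l,0}$ (valid because $x \in F(\tau^k)$ forces $\tau^l(x)$ for $l = 0$ and $l = k$ to coincide, so the sum over $l \in \{0, \ldots, k-1\}$ genuinely telescopes the Fourier inversion); and second, that $\tau(x_j) = \omega^j x_j$. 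For the latter, I would compute
\begin{equation*}
\tau(x_j) = \frac{1}{k}\sum_{l=0}^{k-1} \omega^{-lj} \tau^{l+1}(x) = \frac{1}{k}\sum_{l=1}^{k} \omega^{-(l-1)j} \tau^{l}(x) = \omega^j \cdot \frac{1}{k}\sum_{l=1}^{k} \omega^{-lj} \tau^{l}(x),
\end{equation*}
and then use $\tau^k(x) = x = \tau^0(x)$ together with $\omega^{-kj} = 1 = \omega^{-0 \cdot j}$ to re-index the sum back to $l \in \{0, \ldots, k-1\}$, giving $\tau(x_j) = \omega^j x_j$. Uniqueness of the decomposition is inherited from Lemma \ref{generalcase}.

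The only mild obstacle is bookkeeping with the index shift: one must be careful that $x \in F(\tau^k)$ (not merely $x \in V$) is exactly what makes the wrap-around $\tau^k(x) = x$ legitimate, so that the Fourier inversion formula closes up. Everything else is a routine consequence of Lemma \ref{generalcase} and the orthogonality of characters of $\mathbb{Z}/k\mathbb{Z}$; no new ideas are needed beyond recognizing $x^k - 1$ as the right polynomial to feed into the previous lemma.
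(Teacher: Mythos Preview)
Your proposal is correct and follows essentially the same route as the paper: apply Lemma \ref{generalcase} to $p(x)=x^k-1$ for the direct sum decomposition, then verify the explicit formula for $x_j$ by checking $\tau(x_j)=\omega^j x_j$ via an index shift using $\tau^k(x)=x$, and $\sum_j x_j=x$ via the orthogonality relation $\sum_{j=0}^{k-1}\omega^{-lj}=k\delta_{l,0}$. The only cosmetic difference is the order in which the two verifications are presented; your additional remark that one could alternatively simplify the product formula from Lemma \ref{generalcase} is a nice aside but, as you note, not needed.
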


\begin{proof}
The first part is an immediate consequence of the previous lemma, by
considering the polynomial $p(x)= x^k-1= \Pi _{j=0}^{k-1}(x- \omega
^j).$ For the second part, as $\tau ^k(x)=x$,
$$\tau (x_j)= \sum _{l=0}^{k-1}\omega ^{-lj}\tau ^{l+1}(x)
=\omega ^j\sum _{l=0}^{k-1}\omega ^{-(l+1)j}\tau ^{l+1}(x) =\omega
^j \sum _{l=0}^{k-1}\omega ^{-lj}\tau ^l(x)=\omega ^jx_j.$$ Hence
$x_j\in E_{\omega ^j}(\tau ).$ Further,
$$\sum _{j=0}^{k-1}x_j= \frac{1}{k}\sum _{j,l=0}^{k-1}\omega
^{-lj}\tau ^j(x)= \frac{1}{k}\sum _{j=0}^{k-1}\tau ^{j}(x).(\sum
_{l=0}^{k-1}\omega ^{-lj})=\frac{1}{k}x.k=x,$$ as $\sum
_{l=0}^{k-1}\omega ^{-lj}= \frac{\omega ^{kj}-1}{\omega -1}=0$ for
$j\neq 0.$ \end{proof}

Fix $k\in {\mathbb N}$. Recall the $k$-cyclic Poisson boundary
${\mathcal P}_{G_k}(\tau )$ defined in Definition \label{k-cyclic},
using the subgroup $G_k=\{1, \omega , \ldots , w^{k-1}\}$ of
${\mathbb T}$ where $\omega =e^{\frac{2\pi i}{k}}.$

\begin{theorem}\label{stability}
 Let ${\mathcal A}\subseteq B({\mathcal
H})$ be a von Neumann algebra and let $\tau :{\mathcal A}\to
{\mathcal A}$ be a normal unital completely positive map. Then for
every natural number $k\geq 1$, the peripheral Poisson boundary,
\begin{equation}\label{full} {\mathcal P}(\tau ^k)= {\mathcal P}(\tau ),\end{equation} and the Poisson
boundary \begin{equation}\label{cyclic} {\mathcal P}_{\{1\}}(\tau
^k)= {\mathcal P}_{G_k}(\tau ).\end{equation} In particular,
$k$-cyclic Poisson boundary is a von Neumann algebra for every $k.$
\end{theorem}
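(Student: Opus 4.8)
The plan is to reduce both equalities to the purely algebraic identities for eigenspaces established in Lemmas \ref{generalcase} and \ref{specialcase}, and then to check that the $C^*$-products transfer correctly using the formula of Theorem \ref{formula}. Throughout set $\omega=e^{2\pi i/k}$ and assume $k\geq 2$, the case $k=1$ being trivial.

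First I would prove the set equality $E(\tau^k)=E(\tau)$ inside $\mathcal{A}$. If $x\in E_\lambda(\tau)$ with $\lambda\in\T$, then $\tau^k(x)=\lambda^k x$ and $\lambda^k\in\T$, so every peripheral eigenvector of $\tau$ is a peripheral eigenvector of $\tau^k$, whence $E(\tau)\subseteq E(\tau^k)$. Conversely, suppose $\tau^k(x)=\nu x$ with $\nu\in\T$, and pick any $k$-th root $\lambda$ of $\nu$; then $p(t)=t^k-\nu=\prod_{j=0}^{k-1}(t-\lambda\omega^j)$ has $k$ distinct roots, all of modulus $1$, and Lemma \ref{generalcase} gives $x\in E_\nu(\tau^k)=\ker p(\tau)=\bigvee_{j=0}^{k-1}E_{\lambda\omega^j}(\tau)\subseteq E(\tau)$. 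Hence $E(\tau^k)=E(\tau)$. Since the norm and the involution are those inherited from $\mathcal{A}$ and are not altered in forming the peripheral Poisson boundary, taking norm closures shows that $\mathcal{P}(\tau^k)$ and $\mathcal{P}(\tau)$ are the same operator system.

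Next I would verify that the two products agree. For $x\in E_\lambda(\tau)$, $y\in E_\mu(\tau)$ with $\lambda,\mu\in\T$, Theorem \ref{formula} gives $x\circ_\tau y=s-\lim_{n\to\infty}(\lambda\mu)^{-n}\tau^n(xy)$; regarding $x,y$ instead as elements of $E_{\lambda^k}(\tau^k)$ and $E_{\mu^k}(\tau^k)$, the same theorem applied to $\tau^k$ gives $x\circ_{\tau^k}y=s-\lim_{m\to\infty}(\lambda^k\mu^k)^{-m}(\tau^k)^m(xy)=s-\lim_{m\to\infty}(\lambda\mu)^{-km}\tau^{km}(xy)$, which is simply the subsequence $n=km$ of the strongly convergent sequence defining $x\circ_\tau y$, and hence has the same limit. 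Since $E(\tau)=E(\tau^k)$ is spanned by such peripheral eigenvectors and both products are bilinear and jointly norm-continuous for the common norm, they coincide on the closure $\mathcal{P}(\tau)=\mathcal{P}(\tau^k)$; this proves \eqref{full}.

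For \eqref{cyclic}, Lemma \ref{specialcase} gives directly $E_1(\tau^k)=F(\tau^k)=\bigvee_{j=0}^{k-1}E_{\omega^j}(\tau)=E_{G_k}(\tau)$ as vector spaces, and since $F(\tau^k)=\ker(\tau^k-I)$ is already norm closed, taking closures yields $\mathcal{P}_{\{1\}}(\tau^k)=F(\tau^k)=\mathcal{P}_{G_k}(\tau)$, with matching products by the computation above restricted to these subspaces. Finally, $\mathcal{P}_{\{1\}}(\tau^k)=F(\tau^k)$ equipped with the Choi-Effros product of $\tau^k$ is by definition the ordinary noncommutative Poisson boundary of $\tau^k$, which is a von Neumann algebra, so the same holds for $\mathcal{P}_{G_k}(\tau)$. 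The only delicate point is the last step of the product comparison, namely passing from agreement on the dense subspace of eigenvectors to agreement on its closure; this rests on joint norm-continuity of multiplication in a $C^*$-algebra together with the fact that $\mathcal{P}(\tau)$ and $\mathcal{P}(\tau^k)$ have the same underlying normed $*$-space, and I do not anticipate a serious obstacle.
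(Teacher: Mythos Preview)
Your proof is correct. It differs from the paper's in one respect worth noting: the paper passes at once to the minimal dilation $(\mathcal{K},\mathcal{B},\theta)$ and asserts that it suffices to prove $E(\theta^k)=E(\theta)$ and $F(\theta^k)=E_{G_k}(\theta)$; since $\theta$ and $\theta^k$ are both $*$-endomorphisms of $\mathcal{B}$, the peripheral Poisson boundary product for each is simply the ambient product in $\mathcal{B}$, so no separate comparison of the two $\circ$-products is needed. You instead stay at the level of $\tau$, establish the set equalities there (the algebraic input from Lemmas \ref{generalcase} and \ref{specialcase} is the same), and then invoke the intrinsic formula of Theorem \ref{formula} to check directly that $\circ_\tau$ and $\circ_{\tau^k}$ agree on eigenvectors, extending by bilinearity and joint norm-continuity of multiplication in a $C^*$-algebra. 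Your route is slightly longer but arguably cleaner: it sidesteps the tacit step in the paper's argument that $(\mathcal{K},\mathcal{B},\theta^k)$ is an admissible dilation of $\tau^k$ for the purposes of Theorem \ref{main} (it is, since $(\theta^k)^n(p)=\theta^{kn}(p)\uparrow I$ and the dilation identity holds, though it need not be minimal), which is what is really needed to transport the problem from $\tau$ to $\theta$.
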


\begin{proof}
 Let $({\mathcal K}, {\mathcal B}, \theta )$, be the minimal
dilation of $\tau .$ It suffices to show these equalities for
$\theta $ instead of $\tau $. Then the product $\circ $ is same as
the usual product.

Fix $k\geq 2$. For any $\lambda \in {\mathbb T}$, the polynomial
$p(x)= x^k-\lambda $ has distinct roots, $\{ \mu .\omega ^j: 0\leq
j\leq (k-1)\}$ for some $\mu $ satisfying $\mu ^k=\lambda .$
Therefore, from Lemma \ref{generalcase},
$$E_{\lambda }(\theta ^k )= \bigvee _{j=0}^{k-1}E_{\mu .\omega
^j}(\theta ).$$ As this is true for every $\lambda $ in the circle,
we get $E(\theta ^k)=E(\theta )$. Taking norm closures we get the
first equality. The second part follows in a similar fashion from
Lemma \ref{specialcase}. \end{proof}

It may be noted that equation (\ref{cyclic}) along with Lemma
\ref{specialcase}, tells  us as to how to decompose a vector in the
Poisson boundary of $\tau ^k$  in terms of elements of  $k$-cyclic
peripheral Poisson boundary of $\tau .$

\begin{corollary}

Let ${\mathcal A}$ be a von Neumann algebra and let $\tau :{\mathcal
A}\to {\mathcal A}$ be a normal UCP map. Suppose the group generated
by the peripheral point spectrum of $\tau $ is finite. Then the
peripheral Poisson boundary of $\tau $ is same as the Poisson
boundary of $\tau ^k$ for some $k$. In particular, it is a von
Neumann algebra. \end{corollary}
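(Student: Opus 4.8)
The plan is to deduce this from Theorem \ref{stability} together with the finiteness hypothesis. First I would let $H$ denote the subgroup of $\mathbb{T}$ generated by the peripheral point spectrum of $\tau$, which by assumption is finite; every finite subgroup of $\mathbb{T}$ is cyclic, so $H = G_k$ for some $k \in \mathbb{N}$, where $G_k = \{1, \omega, \ldots, \omega^{k-1}\}$ with $\omega = e^{2\pi i/k}$. Since all peripheral eigenvalues of $\tau$ lie in $G_k$, we have $E_\lambda(\tau) = \{0\}$ for every $\lambda \in \mathbb{T} \setminus G_k$, and hence $E(\tau) = E_{G_k}(\tau)$; taking norm closures, $\mathcal{P}(\tau) = \mathcal{P}_{G_k}(\tau)$.

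Next I would invoke equation (\ref{cyclic}) of Theorem \ref{stability}, which gives $\mathcal{P}_{G_k}(\tau) = \mathcal{P}_{\{1\}}(\tau^k) = F(\tau^k)$ — the ordinary (harmonic) Poisson boundary of the UCP map $\tau^k$. Combining the two identifications, $\mathcal{P}(\tau) = F(\tau^k)$ as $C^*$-algebras (recall that under the new product on $\mathcal{P}_{G_k}(\tau)$ one recovers precisely the Choi--Effros product on $F(\tau^k)$, since both are transported from the ordinary product on the corresponding eigenspaces of the dilation $\theta$). Finally, the Poisson boundary $F(\tau^k)$ of a normal UCP map on a von Neumann algebra is itself a von Neumann algebra by the Choi--Effros theorem, so $\mathcal{P}(\tau)$ is a von Neumann algebra.

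This argument is essentially a bookkeeping exercise once Theorem \ref{stability} is in hand, so I do not expect a serious obstacle; the only point requiring a moment's care is checking that the peripheral point spectrum being contained in a finite group is genuinely equivalent to $E(\tau) = E_{G_k}(\tau)$ — that is, that there are no peripheral eigenvalues outside $G_k$ — which is immediate from the definition of $H$ as the group they generate, since $H \subseteq G_k$ forces each individual eigenvalue into $G_k$. One should also note that the statement is about $\tau$ directly rather than its dilation, so the identification of products must be traced through $T$; but this is exactly what Theorem \ref{main} and the remark preceding Definition \ref{k-cyclic} already provide.
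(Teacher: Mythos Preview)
Your argument is correct and follows essentially the same route as the paper: from finiteness of the group generated by the peripheral point spectrum you extract a $k$ with all peripheral eigenvalues lying in $G_k$, whence $\mathcal{P}(\tau) = \mathcal{P}_{G_k}(\tau) = \mathcal{P}_{\{1\}}(\tau^k)$ by Theorem~\ref{stability}. The paper's proof is terser but identical in substance; your additional remarks on product compatibility and the cyclicity of finite subgroups of $\mathbb{T}$ are sound but not strictly needed.
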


\begin{proof}
The hypothesis implies that there exists $k\in {\mathbb N}$ such
that every $\lambda \in {\mathbb T}$ with $E_{\lambda }(\tau )$
non-trivial is of the form $e^{\frac{2\pi ij}{k}}$ for some $j.$
Then by the previous theorem ${\mathcal P}(\tau )={\mathcal
P}_{G_k}(\tau )={\mathcal P}_{\{1\}}(\tau ^k).$
\end{proof}

\section*{Appendix}

Here we show that  the compression map $T: \mathcal{P}(\theta) \to
\mathcal{P}(\tau)$ of \Cref{main} is in fact, a complete isometry.
To see this, let us consider the $m^{th}$ ampliation map
$$T^{(m)}:=I_m\otimes T :\mathbb{M}_m \otimes \mathcal{P}(\theta) \to \mathbb{M}_m \otimes \mathcal{P}(\tau).$$
Since $\mathbb{M}_m \otimes E(\theta)$ is dense in $\mathbb{M}_m \otimes \mathcal{P}(\theta)$, it suffices to show that $\norm{ T^{(m)}(Y) } = \norm{Y}$ for every $Y \in \mathbb{M}_m \otimes E(\theta)$.

Let $Y=\sum_{i,j=1}^{m} E_{ij} \otimes y_{ij} \in \mathbb{M}_m
\otimes E(\theta)$, with $y_{ij}\in E(\theta )$, where $\{ E_{ij} :
\leq i,j \leq m \}$ are the matrix units of $\M_m$. For each $1\leq
i,j \leq m$, set $x_{ij}:=p y_{ij} p = T(y_{ij})$ and $X=T^{(m)}
(Y)=\sum_{i,j=1}^{m} E_{ij} \otimes x_{ij}$. As $\norm{I_m \ot p} =
\norm{I_m} \norm{p} = \norm{p} \leq 1$, we have $$\|X\| =
\left\|\sum_{i,j=1}^{m} E_{ij}\otimes (py_{ij}p) \right\| = \left\|
\left(\sum_{i=1}^m E_{ii}\otimes p \right) \left(\sum_{i,j=1}^{m}
E_{ij}\otimes y_{ij} \right) \left(\sum_{i=1}^m E_{ii}\otimes p
\right) \right\| \leq \|Y\|.$$

Suppose $y_{ij}=\sum_{k_{ij}=1}^{d_{ij}}y_{ijk_{ij}}$ where
$y_{ijk_{ij}}\in E_{\lambda_{ijk_{ij}}}(\theta).$ So from
\Cref{TIsOnto} we have  $$y_{ijk_{ij}}=s-\lim_{n\to \infty}
\frac{1}{(\lambda_{ijk_{ij}})^{n}}\theta^{n}(x_{ijk_{ij}}), $$ where
$x_{ijk_{ij}}=py_{ijk_{ij}}p$. Therefore
\begin{align*}
Y= \sum_{i,j=1}^{m} E_{ij} \otimes y_{ij}&= \sum_{i,j=1}^{m} E_{ij} \otimes \left( \sum_{k_{ij}=1}^{d_{ij}}y_{ijk_{ij}} \right)\\
&= s- \lim_{n \to \infty} \sum_{i,j=1}^{m} E_{ij}\otimes \left(\sum_{k_{ij}=1}^{d_{ij}}\frac{1}{(\lambda_{ijk_{ij}})^{n}}\theta^{n}(x_{ijk_{ij}})\right)\\
&=  s- \lim_{n \to \infty} \sum_{i,j=1}^{m} E_{ij}\otimes \theta^{n} \left(\sum_{k_{ij}=1}^{d_{ij}}\frac{1}{(\lambda_{ijk_{ij}})^{n}} x_{ijk_{ij}}\right)\\
&=  s- \lim_{n \to \infty} {(\theta^n)}^{(m)} \left( \sum_{i,j=1}^{m} E_{ij}\otimes \left(\sum_{k_{ij}=1}^{d_{ij}}\frac{1}{(\lambda_{ijk_{ij}})^{n}} x_{ijk_{ij}}\right)\right),
\end{align*}
where ${(\theta^n)}^{(m)} =I_m \otimes \theta^n$ is the $m^{th}$
ampliation of $\theta^n$.

Let $h \in \oplus_{l = 1}^m K$ with $\|h\|=1$ and let $\epsilon > 0$
be arbitrary. We claim that $\norm{Yh} \leq \norm{X} + \epsilon$.
From triangle inequality it follows that
\begin{eqnarray*}
\|Yh\| & \leq & \left\|Yh-  {\theta^n}^{(m)} \left( \sum_{i,j=1}^{m} E_{ij}\otimes \left(\sum_{k_{ij}=1}^{d_{ij}}\frac{1}{(\lambda_{ijk_{ij}})^{n}} x_{ijk_{ij}} \right) \right)h \right\| \\
& + &
\left\|{\theta^n}^{(m)} \left( \sum_{i,j=1}^{m} E_{ij}\otimes \left(\sum_{k_{ij}=1}^{d_{ij}}\frac{1}{(\lambda_{ijk_{ij}})^{n}} x_{ijk_{ij}} \right) \right)h-  {\theta^n}^{(m)} \left( \sum_{i,j=1}^{m} E_{ij}\otimes \left(\sum_{k_{ij}=1}^{d_{ij}} x_{ijk_{ij}} \right) \right)h \right\| \\
& + & \left\|{\theta^n}^{(m)} \left( \sum_{i,j=1}^{m} E_{ij}\otimes \left(\sum_{k_{ij}=1}^{d_{ij}} x_{ijk_{ij}} \right)\right)h \right\|.
\end{eqnarray*}
Using the expression of $Y$ as a certain strong limit, one may find
$N \in \N$ such that  for every $n > N$ the first one of these three
terms is less than $\epsilon/2$. The third term is less than or
equal to $\norm{X}$ as ${\theta^n}^{(m)}$ is a contraction. The
following inequalities explain how one can control the middle term
by repeated application of \Cref{dynamics} to the $m^2$ sequences
determined by each fixed $i$ and $j$.

\begin{eqnarray*}
&& \left\|{\theta^n}^{(m)} \left( \sum_{i,j=1}^{m} E_{ij}\otimes  \left( \sum_{k_{ij}=1}^{d_{ij}}\frac{1}{(\lambda_{ijk_{ij}})^{n}} x_{ijk_{ij}} \right) \right)h -  {\theta^n}^{(m)} \left( \sum_{i,j=1}^{m} E_{ij}\otimes \left( \sum_{k_{ij}=1}^{d_{ij}} x_{ijk_{ij}} \right) \right)h \right\| \\
&=& \left\| {\theta^n}^{(m)} \left( \sum_{i,j=1}^{m} E_{ij} \otimes \left( \sum_{k_{ij}=1}^{d_{ij}}\frac{1}{(\lambda_{ijk_{ij}})^{n}} x_{ijk_{ij}}- x_{ijk_{ij}} \right) \right) h \right\| \\
&\leq&  \left\| \sum_{i,j=1}^{m} E_{ij}\otimes \left( \sum_{k_{ij}=1}^{d_{ij}}\frac{1}{(\lambda_{ijk_{ij}})^{n}} x_{ijk_{ij}}- x_{ijk_{ij}} \right) \right\| \|h \| \\
&\leq& \sum_{i,j=1}^{m} \left\|  E_{ij} \right\| \left\| \left(\sum_{k_{ij}=1}^{d_{ij}}\frac{1}{(\lambda_{ijk_{ij}})^{n}} x_{ijk_{ij}}- x_{ijk_{ij}}\right) \right\| \leq  \sum_{i,j=1}^{m}  \left\| \left(\sum_{k_{ij}=1}^{d_{ij}}\frac{1}{(\lambda_{ijk_{ij}})^{n}} x_{ijk_{ij}}- x_{ijk_{ij}}\right) \right\| \leq \epsilon/2.
\end{eqnarray*}
This shows that $\|Yh\|\leq \|X\| + \epsilon$ and hence $\|Y\|\leq
\|X\| + \epsilon$. As $\epsilon$ was arbitrary, we obtain $\norm{X}
= \norm{Y}$, which proves that $T$ is a complete isometry.

\section*{Acknowledgments}
Bhat gratefully acknowledges funding from  SERB(India) through JC Bose Fellowship No. JBR/2021/000024.
Talwar thanks ISI Bangalore: TSMD/PnC/2021-2022/RA/TSMU-Bangalore/ 009  for financial support through Research Associate scheme.
A part of this work was completed during his research assistantship at Nazarbayev University and he appreciates their support.
Kar is thankful to NBHM (India): 0204/21/2021/R$\&$D-II/9988 for funding.
We thank J. Sarkar for informing us about the literature on $\lambda$-Toeplitz operators.
We extend our gratitude to the reviewer for carefully reading the manuscript and noticing that the map $T$ is a complete isometry.

\end{document}